\documentclass[graybox]{svmult}

\usepackage{graphicx}
\usepackage{multicol}
\usepackage[bottom]{footmisc}

\usepackage{cite,url}
\usepackage{times}

\usepackage{amssymb}
\usepackage{amsmath}
\usepackage{amsfonts}
\usepackage{arydshln}

\newcommand{\an}[1]{\textcolor{black}{#1}}
\def\bx{{\mathbf{x}}}

\newtheorem{assumption}{Assumption}

\providecommand{\rset}[1]{\mathbb{R}^}
\providecommand{\abs}[1]{\lvert#1\rvert}
\providecommand{\norm}[1]{\lVert#1\rVert}


\begin{document}

\title*{Complexity certifications of
first order inexact Lagrangian methods for general convex programming}

\titlerunning{Complexity certifications of  first order inexact dual methods}

\author{Ion Necoara, Andrei Patrascu  and \an{Angelia Nedi\'c} }

\institute{I. Necoara \and A. Patrascu \at Automatic Control and
Systems Engineering Department, University  Politehnica Bucharest,
Romania, \email{ion.necoara,andrei.patrascu@acse.pub.ro}.  Second
author's work has been funded by the Sectoral Operational Programme
Human Resources Development 2007-2013 of the Ministry of European
Funds through the Financial Agreement POSDRU/159/1.5/S/134398. \and
A. Nedi\'c \at Industrial and Enterprise Systems Engineering
Department, University of Illinois at Urbana-Champaign, USA,
\email{angelia@illinois.edu}. Third author's work has been funded by
the Office of Naval Research under grant no. N00014-12-1-0998.  }

\maketitle

\abstract{ In this chapter we derive computational complexity
certifications of first order inexact dual methods for solving
general smooth \an{constrained convex problems which can arise in
real-time applications, such as} model predictive control. When it
is difficult to project on the primal \an{constraint set described
by a collection of general convex functions,} we use the Lagrangian
relaxation to handle the complicated constraints and then, we apply
dual (fast) gradient algorithms based on inexact dual gradient
information for solving the corresponding dual problem. The
iteration complexity analysis is based on two types of approximate
primal solutions:  the primal last iterate and an average of primal
iterates. We provide sublinear computational complexity estimates on
the primal suboptimality and \an{constraint (feasibility)} violation
of the generated approximate primal solutions. In the final part of
the chapter, we present an open-source quadratic optimization
solver, \an{referred to as DuQuad, for convex quadratic programs and
for evaluation of its behavior. The solver contains the C-language
implementations of the analyzed algorithms.} }

\section{Introduction}\label{in:sec1}
\noindent Nowadays, many engineering applications can be  posed as
general smooth \an{constrained} convex problems.  Several important
applications that can be modeled in this framework \an{have
attracted great attention lately, such as model predictive control
for dynamical linear systems and its dual (often  referred to as
moving horizon estimation)}
\cite{NecFer:14c,NedNec:14,PatBem:12j,RicJon:12,RawMay:09}, DC
optimal power flow problem for \an{power systems} \cite{ZimMur:11},
\an{and} network utility maximization problems \cite{WeiOzd:10}.
\an{Notably}, the recent advances in hardware and numerical
optimization made it possible to solve linear model  predictive
control problems of nontrivial sizes within microseconds even on
hardware platforms with limited computational power and memory.

\noindent In this chapter, we are particularly interested in
real-time linear model predictive control (MPC) problems. For  MPC,
the corresponding optimal control problem can be recast as a smooth
\an{constrained} convex optimization problem.  There are numerous
ways in which this problem can be solved. For example, an interior
point method has been proposed in \cite{RaoWri:98} and an active set
method was described in \cite{FerKir:14}. Also, explicit MPC \an{has
been proposed in \cite{BemMor:02},} where the optimization problem
is solved off-line for all possible states. In \an{real-time (or
on-line)} applications, these methods can sometimes fail due to
their overly complex iterations in the case of interior point  and
active set  methods, or due to the large dimensions of the problem
in the case of explicit \an{MPCs}. Additionally, when embedded
systems are employed, computational complexities need to be kept to
a minimum. As a result, second order algorithms (e.g. interior
point), \an{which} most often require matrix inversions, are usually
left out. \an{In such applications, first}  order algorithms are
more suitable
\cite{NecFer:14c,NecNed:14b,NedNec:14,PatBem:12j,RicJon:12}
\an{especially for instances} when computation power and memory is
limited. For many optimization problems arising in engineering
applications, such as real-time \an{MPCs}, the constraints are
overly complex, making projections on these sets \an{computationally
prohibitive}. This is most often the main impediment of applying
first order methods on the primal optimization problem. To
circumvent this, the dual approach is considered \an{by forming the
dual problem, whereby the complex constraints are moved into the
objective function, thus rendering} much simpler constraints for the
dual variables, often being only the non-negative orthant.
\an{Therefore}, we consider dual first order methods for solving the
dual problem. The computational complexity certification of
gradient-based methods for solving the (augmented) Lagrangian dual
of a primal convex problem is studied e.g. in
\cite{BecNed:14,DevGli:11,KosNed,NecPat:14,NecNed:14b,
NecNed:14a,NedNec:14,NedOzd:08,PatBem:12j}.   However,  these papers
either threat quadratic problems \cite{PatBem:12j} or linearly
constrained smooth convex problems with simple objective function
\cite{BecNed:14,NecPat:14}, or  the approximate primal solution is
generated through  averaging
\cite{NecNed:14b,NecNed:14a,NedNec:14,NedOzd:08}. On the other hand,
in practice usually the last primal  iterate is employed. There are
few attempts to derive the computational complexity of dual gradient
based methods using as an approximate primal solution the last
iterate of the algorithm for particular cases of convex problems
\cite{NecNed:14a,NecPat:14,BecNed:14}. Moreover,  from our practical
experience we have observed that usually these methods converge
faster in the primal last iterate than in a primal  average
sequence. These issues motivate our work here.

\vspace{2pt}

\noindent \textit{Contribution}.
In this chapter,  we analyze the
computational complexity of dual first order methods for solving
general smooth \an{constrained} convex problems. Contrary to most of the results from
the literature \cite{BecNed:14,NecPat:14,NecNed:14a,
NedOzd:08,PatBem:12j}, our approach allows us to use inexact dual
gradient information.    Another important feature of our approach
is that we \an{also} provide complexity results for the primal latest
iterate, while in much of the previous literature convergence rates
in an average of primal iterates are given. This feature is of
practical importance since usually the primal last iterate is
employed in applications. More precisely, the main contributions in
this chapter are:

\vspace{2pt}

\noindent $(i)$ We derive the computational complexity  of the dual
gradient method in terms of primal suboptimality and feasibility
violation using inexact dual gradients and two types of approximate
primal solutions: $\mathcal{O}\left(\frac{1}{\epsilon^2} \log
\frac{1}{\epsilon} \right)$ in the primal last iterate and
$\mathcal{O} \left(\frac{1}{\epsilon} \log \frac{1}{\epsilon}
\right)$ in an average of  primal iterates, where $\epsilon$ is some
desired accuracy.

\noindent $(ii)$ We also derive the computational complexity of the
dual  fast gradient method in terms of primal suboptimality and
feasibility violation using inexact dual gradients and two types of
approximate primal solutions: $\mathcal{O}\left(\frac{1}{\epsilon}
\log \frac{1}{\epsilon}\right)$ in the primal  last iterate and
$\mathcal{O} \left(\frac{1}{\sqrt{\epsilon}} \log
\frac{1}{\epsilon}\right)$ in a primal average sequence.

\noindent $(iii)$ Finally, we present an open-source  optimization
solver, \an{termed} DuQuad, consisting of the C-language implementations
of the above inexact dual first order  algorithms for solving
convex quadratic problems, and \an{we study} its numerical behavior.

\vspace{2pt}

\noindent \textit{\an{Content}}.
The chapter is organized as follows. In
Section 2 we formulate our problem of interest and its dual, and \an{we}
analyze its smoothness property. In Section 3 we introduce a general
inexact dual first order method, covering the inexact dual gradient
and fast gradient algorithms, and  \an{we } derive computational complexity
certificates for these schemes. Finally, in Section 4 we describe
briefly the DuQuad toolbox \an{that implements the above
inexact algorithms for solving convex quadratic programs in C-language}, while in
Section 5 we provide detailed numerical experiments.

\vspace{2pt}

\noindent \textit{\an{Notation}}.
We consider the space $\rset^n$
composed  \an{of} column vectors.  For $\mathbf{x}, \mathbf{y} \in \rset^n$, \an{we} denote the
scalar product by $\langle \mathbf{x},\mathbf{y} \rangle = \mathbf{x}^T \mathbf{y}$ and the  Euclidean
norm by $\|\mathbf{x}\|=\sqrt{\mathbf{x}^T \mathbf{x}}$. We denote the nonnegative orthant by
$\rset^n_{+}$ and \an{we use $[\mathbf{u}]_{+}$ for}  the projection of $\mathbf{u}$ onto
$\rset^n_+$. The minimal eigenvalue of a symmetric matrix $\mathbf{Q} \in
\rset^{n \times n}$ is denoted by $\lambda_{\min}(\mathbf{Q})$ and $\|\mathbf{Q}\|_F$
denotes its Frobenius norm.

\def\b0{{\mathbf{0}}} 


\section{Problem formulation}
\label{in:sec3}
\noindent In this \an{section}, we consider
the following general \an{constrained} convex optimization problem:
\begin{align}
\label{in:eq_prob_princc1}
 f^* = \min_{\mathbf{u} \in U}& \;\;  f(\mathbf{u})  \qquad \text{s.t.:}  \;\; \mathbf{g}(\mathbf{u}) \leq \b0,
\end{align}
where $U \subseteq \rset^n$ is a closed simple convex set (e.g. a box set), $\b0\in\rset^p$ is a vector of zeros,
and \an{the constraint mapping $\mathbf{g}(\cdot)$
is given by $\mathbf{g}(\cdot) =\an{[g_1(\cdot), \ldots,}g_p(\cdot)]^T$.
(The vector inequality $\mathbf{g}(\mathbf{u}) \leq \b0$ is to be understood coordinate-wise.)
The objective function $f(\cdot)$ and the constraint functions $g_1(\cdot), \ldots, g_p(\cdot)$ are convex and differentiable
over their domains.}
Many engineering
applications can be posed as the general convex problem
\eqref{in:eq_prob_princc1}. \an{For example for} linear model predictive control
problem in condensed form
\cite{NecFer:14c,NedNec:14,PatBem:12j,RicJon:12,RawMay:09}: $f$ is
convex (quadratic) function, $U$ is  box set describing the input
constraints and $\mathbf{g}$ is \an{given by convex functions} describing the state
constraints; \an{for } network utility maximization problem \cite{BecNed:14}:
$f$ is $\log$ function, $U =\rset^n_+$ and $\mathbf{g}$ is linear function
describing the link capacities; \an{for } DC optimal power flow problem
\cite{ZimMur:11}: $f$ is convex function, $U$ is  box set and $\mathbf{g}$
describes the DC nodal power balance constraints.

\noindent We are interested in deriving computational complexity estimates of dual first order methods for solving the optimization problem \eqref{in:eq_prob_princc1}. We make the following assumptions on the objective function and the feasible set
of the problem~\eqref{in:eq_prob_princc1}.

\begin{assumption}
\label{in:assump_all}
\an{
Let $U \subseteq \text{dom} \ f \cap
\left\{ \cap_{i=1}^p \text{dom} \ g_i \right\} $, and assume that:}

\vspace*{3pt}
\noindent
\an{
(a)  The Slater condition holds for the
feasible set  of problem \eqref{in:eq_prob_princc1}, i.e.,
there exists $\bar{\mathbf{u}} \in \text{relint}(U)$ such that $ \mathbf{g}(\bar{\mathbf{u}})
<\b0$.}

\vspace*{3pt}
\noindent (b)  The function $f$ is
strongly convex with constant $\sigma_f >0$ and has Lipschitz
continuous \an{gradients} with constant $L_f$, i.e.:
\begin{equation*}
 \frac{\sigma_f}{2}\norm{\mathbf{u}- \mathbf{v}}^2 \le f(\mathbf{u}) - \left( f(\mathbf{v}) + \langle
  \nabla f(\mathbf{v}), \mathbf{u}- \mathbf{v} \rangle \right) \le \frac{L_f}{2} \norm{\mathbf{u} - \mathbf{v}}^2
   \quad \forall \mathbf{u}, \mathbf{v} \in U.
\end{equation*}
\noindent (c)  The function $\mathbf{g}$ has bounded Jacobians on the set
$U$, i.e.,  there exists  $c_g > 0$ such that $\|\nabla \mathbf{g}(\mathbf{u})\|_F \leq
c_g$ \;  for all $\mathbf{u} \in U$.
\end{assumption}

\noindent Moreover, we introduce the following definition:
\begin{definition}
Given $\epsilon>0$, a primal point $\mathbf{u}_{\epsilon} \in U$ is called $\epsilon$-optimal if it satisfies:
 \begin{equation*}
  | f(\mathbf{u}_{\epsilon}) - f^*| \le \epsilon \quad \text{and} \quad \left\| \left[\mathbf{g}(\mathbf{u}_{\epsilon}) \right]_+ \right\| \le \epsilon.
 \end{equation*}
\end{definition}

\noindent
Since $U$ is assumed to be a simple set, i.e. the
projection on this set is easy (e.g. a box set),  we denote the associated dual
problem of \eqref{in:eq_prob_princc1} as:
\begin{equation}\label{in:eq_dual_prob}
\max_{\mathbf{x} \ge 0} \; d(\mathbf{x}) \quad \left( = \min_{\mathbf{u} \in U} \mathcal{L}(\mathbf{u},\mathbf{x}) \right),
\end{equation}
where the Lagrangian function is given by:
\begin{equation*}
\mathcal{L}(\mathbf{u},\mathbf{x}) = f(\mathbf{u}) + \langle \mathbf{x}, \mathbf{g}(\mathbf{u})\rangle.
\end{equation*}
We denote the dual optimal set with  \an{$X^* =
\arg\max\limits_{\mathbf{x} \ge \b0} d(\mathbf{x})$}.
Note that Assumption~\ref{in:assump_all}$ (a)$ guarantees that strong duality holds for
\eqref{in:eq_prob_princc1}. Moreover, since $f$ is strongly convex
function (see Assumption \ref{in:assump_all}$ (b)$), the inner subproblem
$\min\limits_{\mathbf{u} \in U} \mathcal{L}(\mathbf{u},\mathbf{x})$   has the objective
function $\mathcal{L}(\cdot,\mathbf{x})$ strongly convex for any fixed $\mathbf{x} \in \rset^{p}_+$. It follows that the
optimal solution $\mathbf{u}^*$ of the original problem
\eqref{in:eq_prob_princc1}  and  $\mathbf{u}(\mathbf{x}) = \arg \min_{\mathbf{u} \in U}
\mathcal{L}(\mathbf{u},\mathbf{x})$ are unique and, thus, from Danskin's theorem
\cite{Nes:05} we get that the dual function $d$ is differentiable
on $\rset^n_+$ and its gradient is given by:
\[\nabla d(\mathbf{x}) = \mathbf{g}(\mathbf{u}(\mathbf{x}))\qquad\hbox{for all $\mathbf{x} \in \rset^n_+$}.\]
From Assumption~\ref{in:assump_all}$(c)$ it follows immediately, using the mean value
theorem, that the function $\mathbf{g}$ is Lipschitz continuous with
constant $c_g$, i.e.,
\begin{equation}\label{in:lip_g}
\|\mathbf{g}(\mathbf{u}) - \mathbf{g}(\mathbf{v})\| \leq    c_g \|\mathbf{u}-\mathbf{v}\|  \qquad \forall \mathbf{u}, \mathbf{v} \in U.
\end{equation}
\an{In the forthcoming lemma, Assumption \ref{in:assump_all} $(b)$ and $(c)$ allow us to show}
that the dual function $d$ has Lipschitz
gradient. Our result  is a generalization of a result in
\cite{Nes:05} given there for the case of \an{a linear mapping $\mathbf{g}(\cdot)$}
(see also \cite{NecNed:14b} for a different proof):

\begin{lemma}
\label{in:dual_lips}
Under  Assumption~\ref{in:assump_all},  the dual function \an{$d(\cdot)$} corresponding
to general convex  problem \eqref{in:eq_prob_princc1}
has Lipschitz continuous gradient with constant $L_{\text{d}} =
c_g^2/\sigma_f$, i.e.,
\begin{equation}
\label{in:lipd}
\| \nabla d(\mathbf{x}) - \nabla d(\bar{\mathbf{x}})\|   \leq
c_g^2/\sigma_f \| \mathbf{x} - \bar{\mathbf{x}}\| \qquad \forall \mathbf{x}, \bar{\mathbf{x}}
\in \rset^p_+.
\end{equation}
\end{lemma}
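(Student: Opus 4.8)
The plan is to reduce the Lipschitz continuity of $\nabla d$ to a stability estimate for the inner minimizer $\mathbf{u}(\mathbf{x}) = \arg\min_{\mathbf{u}\in U}\mathcal{L}(\mathbf{u},\mathbf{x})$, exploiting strong convexity of $\mathcal{L}(\cdot,\mathbf{x})$. Since $\nabla d(\mathbf{x}) = \mathbf{g}(\mathbf{u}(\mathbf{x}))$ and $\mathbf{g}$ is Lipschitz with constant $c_g$ by \eqref{in:lip_g}, it suffices to show that $\|\mathbf{u}(\mathbf{x}) - \mathbf{u}(\bar{\mathbf{x}})\| \le (c_g/\sigma_f)\|\mathbf{x} - \bar{\mathbf{x}}\|$ for all $\mathbf{x},\bar{\mathbf{x}}\in\rset^p_+$; combining the two estimates yields $\|\nabla d(\mathbf{x})-\nabla d(\bar{\mathbf{x}})\| \le c_g\cdot(c_g/\sigma_f)\|\mathbf{x}-\bar{\mathbf{x}}\| = (c_g^2/\sigma_f)\|\mathbf{x}-\bar{\mathbf{x}}\|$, which is exactly \eqref{in:lipd}.

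To prove the stability bound, I would write the first-order optimality conditions for the two constrained inner problems. Since $\mathbf{u}(\mathbf{x})$ minimizes the (strongly) convex function $\mathcal{L}(\cdot,\mathbf{x})$ over the convex set $U$, the variational inequality $\langle \nabla_\mathbf{u}\mathcal{L}(\mathbf{u}(\mathbf{x}),\mathbf{x}),\, \mathbf{v} - \mathbf{u}(\mathbf{x})\rangle \ge 0$ holds for all $\mathbf{v}\in U$, and similarly at $\bar{\mathbf{x}}$. Testing the first inequality with $\mathbf{v} = \mathbf{u}(\bar{\mathbf{x}})$, the second with $\mathbf{v} = \mathbf{u}(\mathbf{x})$, and adding gives
\[
\langle \nabla_\mathbf{u}\mathcal{L}(\mathbf{u}(\bar{\mathbf{x}}),\bar{\mathbf{x}}) - \nabla_\mathbf{u}\mathcal{L}(\mathbf{u}(\mathbf{x}),\mathbf{x}),\; \mathbf{u}(\mathbf{x}) - \mathbf{u}(\bar{\mathbf{x}})\rangle \ge 0.
\]
Now I split $\nabla_\mathbf{u}\mathcal{L}(\mathbf{u},\mathbf{x}) = \nabla f(\mathbf{u}) + \nabla\mathbf{g}(\mathbf{u})^T\mathbf{x}$ and regroup: the difference of the $\nabla f$ terms, paired against $\mathbf{u}(\mathbf{x})-\mathbf{u}(\bar{\mathbf{x}})$, is bounded below by $\sigma_f\|\mathbf{u}(\mathbf{x})-\mathbf{u}(\bar{\mathbf{x}})\|^2$ via strong convexity of $f$; the remaining cross terms involve $\nabla\mathbf{g}(\mathbf{u}(\bar{\mathbf{x}}))^T\bar{\mathbf{x}} - \nabla\mathbf{g}(\mathbf{u}(\mathbf{x}))^T\mathbf{x}$. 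I would handle these by a telescoping trick — add and subtract $\nabla\mathbf{g}(\mathbf{u}(\bar{\mathbf{x}}))^T\mathbf{x}$ (or symmetrically) — so that one piece, $\langle(\nabla\mathbf{g}(\mathbf{u}(\bar{\mathbf{x}})) - \nabla\mathbf{g}(\mathbf{u}(\mathbf{x})))^T\mathbf{x},\,\mathbf{u}(\mathbf{x})-\mathbf{u}(\bar{\mathbf{x}})\rangle$, has a favorable sign coming from convexity of each $g_i$ and nonnegativity of the components of $\mathbf{x}$, while the other piece reduces to $\langle\nabla\mathbf{g}(\mathbf{u}(\bar{\mathbf{x}}))^T(\bar{\mathbf{x}}-\mathbf{x}),\,\mathbf{u}(\mathbf{x})-\mathbf{u}(\bar{\mathbf{x}})\rangle$, which is controlled by $\|\nabla\mathbf{g}(\mathbf{u}(\bar{\mathbf{x}}))\|_F\,\|\mathbf{x}-\bar{\mathbf{x}}\|\,\|\mathbf{u}(\mathbf{x})-\mathbf{u}(\bar{\mathbf{x}})\| \le c_g\|\mathbf{x}-\bar{\mathbf{x}}\|\,\|\mathbf{u}(\mathbf{x})-\mathbf{u}(\bar{\mathbf{x}})\|$ by Assumption~\ref{in:assump_all}$(c)$ and Cauchy--Schwarz. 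Rearranging, $\sigma_f\|\mathbf{u}(\mathbf{x})-\mathbf{u}(\bar{\mathbf{x}})\|^2 \le c_g\|\mathbf{x}-\bar{\mathbf{x}}\|\,\|\mathbf{u}(\mathbf{x})-\mathbf{u}(\bar{\mathbf{x}})\|$, and dividing through by $\|\mathbf{u}(\mathbf{x})-\mathbf{u}(\bar{\mathbf{x}})\|$ (the case where it vanishes being trivial) gives the desired bound.

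The main obstacle is bookkeeping the Lagrangian cross terms correctly so that the "bad" sign-indefinite contribution is isolated into something proportional to $\|\mathbf{x}-\bar{\mathbf{x}}\|$, while the genuinely nonlinear-in-$\mathbf{g}$ part is shown to have the right (nonnegative) sign; this is where the convexity of the $g_i$'s and the constraint $\mathbf{x},\bar{\mathbf{x}}\ge\b0$ are essential, and it is precisely the step that fails for nonconvex $\mathbf{g}$. Everything else — the variational inequalities, strong convexity, the Frobenius/Cauchy--Schwarz bound, and the final rearrangement — is routine. An alternative, if one prefers to avoid the regrouping, is to first establish the intermediate estimate $\|\nabla f(\mathbf{u}(\mathbf{x}))-\nabla f(\mathbf{u}(\bar{\mathbf{x}}))\|$-type control and then invoke monotonicity of $\mathbf{u}\mapsto\nabla\mathbf{g}(\mathbf{u})^T\mathbf{x}$ for fixed $\mathbf{x}\ge\b0$, but the direct variational-inequality computation above is the cleanest route.
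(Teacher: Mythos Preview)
Your proposal is correct and follows essentially the same route as the paper's proof: variational inequalities for the two inner minimizers, strong convexity of $f$ to extract $\sigma_f\|\mathbf{u}(\mathbf{x})-\mathbf{u}(\bar{\mathbf{x}})\|^2$, a telescoping split of the $\nabla\mathbf{g}$ cross terms so that one piece is nonpositive by convexity of each $g_i$ and nonnegativity of the multipliers while the other is bounded via the Frobenius-norm assumption, and finally the Lipschitz property \eqref{in:lip_g} of $\mathbf{g}$ to pass from $\mathbf{u}(\cdot)$ to $\nabla d(\cdot)$. The only cosmetic differences are that the paper telescopes with $\bar{\mathbf{x}}_i\nabla g_i(\mathbf{u}(\mathbf{x}))$ (so the Jacobian is evaluated at $\mathbf{u}(\mathbf{x})$ and the sign term carries $\bar{\mathbf{x}}$) rather than your symmetric choice, and it makes the H\"older step $\sum_i |\mathbf{x}_i-\bar{\mathbf{x}}_i|\,\|\nabla g_i\| \le \|\mathbf{x}-\bar{\mathbf{x}}\|\,\|\nabla\mathbf{g}\|_F$ explicit.
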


\begin{proof}
Let $\mathbf{x} ,\bar{\mathbf{x}} \in \rset^p_+$. \an{Then, by} using the optimality conditions
for $\mathbf{u}(\mathbf{x})$ and $\mathbf{u}(\bar{\mathbf{x}})$, we get:
\begin{align*}
& \left\langle  \nabla f(\mathbf{u}(\mathbf{x})) + \sum_{i=1}^p \mathbf{x}_i \nabla g_i(\mathbf{u}(\mathbf{x})),  \mathbf{u}(\bar{\mathbf{x}}) - \mathbf{u}(\mathbf{x}) \right\rangle  \geq 0,\\
& \left\langle  \nabla f(\mathbf{u}(\bar{\mathbf{x}})) + \sum_{i=1}^p \bar{\mathbf{x}}_i \nabla g_i(\mathbf{u}(\bar{\mathbf{x}})), \mathbf{u}(\mathbf{x}) - \mathbf{u}(\bar{\mathbf{x}}) \right\rangle  \geq 0.
\end{align*}
Adding \an{these two} inequalities and using  the strong convexity of $f$, we \an{further obtain}
\begin{align*}
&\sigma_f  \| \mathbf{u}(\mathbf{x}) -  \mathbf{u}(\bar{\mathbf{x}})\|^2  \leq  \langle  \nabla f(\mathbf{u}(\mathbf{x})) - \nabla f(\mathbf{u}(\bar{\mathbf{x}})), \mathbf{u}(\mathbf{x}) - \mathbf{u}(\bar{\mathbf{x}}) \rangle \\
& \leq \left \langle  \sum_i \mathbf{x}_i \nabla g_i(\mathbf{u}(\mathbf{x})) - \sum_i \bar{\mathbf{x}}_i \nabla g_i(\mathbf{u}(\bar{\mathbf{x}})), \mathbf{u}(\bar{\mathbf{x}})  - \mathbf{u}(\mathbf{x}) \right \rangle  \\
& =  \left \langle  \an{\sum_{i=1}^p} (\mathbf{x}_i - \bar{\mathbf{x}}_i) \nabla g_i(\mathbf{u}(\mathbf{x})) \!
-\! \an{\sum_{i=1}^p} \bar{\mathbf{x}}_i (\nabla g_i(\mathbf{u}(\bar{\mathbf{x}})) -
\nabla g_i(\mathbf{u}(\mathbf{x}))), \mathbf{u}(\bar{\mathbf{x}})  - \mathbf{u}(\mathbf{x}) \right \rangle \\
& \leq
\left \langle \! \an{\sum_{i=1}^p}
(\mathbf{x}_i \!-\! \bar{\mathbf{x}}_i) \nabla g_i(\mathbf{u}(\mathbf{x})), \mathbf{u}(\bar{\mathbf{x}}) \! -\! \mathbf{u}(\mathbf{x}) \!\!\right \rangle,
\end{align*}
\an{where the last inequality follows from
the convexity of the function $g_i$  and $\bar{\mathbf{x}}_i\ge0$ for all $i$. By the Cauchy-Schwarz inequality, we have}
\begin{align*}
\sigma_f  \| \mathbf{u}(\mathbf{x}) -  \mathbf{u}(\bar{\mathbf{x}})\|^2
&\leq
\an{\sum_{i=1}^p} |\mathbf{x}_i \!-\! \bar{\mathbf{x}}_i| \|\nabla g_i(\mathbf{u}(\mathbf{x}))\| \|\mathbf{u}(\mathbf{x})  \!-\! \mathbf{u}(\bar{\mathbf{x}})\|  \\
& \leq
\| \mathbf{x} - \bar{\mathbf{x}}\| \|\nabla \mathbf{g}(\mathbf{u}(\mathbf{x}))\|_F \|\mathbf{u}(\mathbf{x})  - \mathbf{u}(\bar{\mathbf{x}})\|  \\
&\leq c_g  \| \mathbf{x} - \bar{\mathbf{x}}\|
\|\mathbf{u}(\mathbf{x})  - \mathbf{u}(\bar{\mathbf{x}})\|,
\end{align*}
\an{where the second inequality follows by H\"older's inequality and the last inequality follows by the bounded Jacobian assumption for $\mathbf{g}$ (see Assumption~\ref{in:assump_all}(c$)$).}
Thus, we obtain:
\[ \| \mathbf{u}(\mathbf{x}) -  \mathbf{u}(\bar{\mathbf{x}})\|  \leq  \frac{c_g}{\sigma_f}  \| \mathbf{x} - \bar{\mathbf{x}}\|. \]
Combining  \eqref{in:lip_g} with the \an{preceding} relation,  we obtain that
the gradient of the dual function is Lipschitz continuous with
constant \an{$L_\text{d} = \frac{c_g^2}{\sigma_f}$, i.e.,}
\begin{align*}
\| \nabla d(\mathbf{x}) - \nabla d(\bar{\mathbf{x}}) \|
& = \|\mathbf{g}(\mathbf{u}(\mathbf{x})) - \mathbf{g}(\mathbf{u}(\bar{\mathbf{x}})) \| \le
c_g \|\mathbf{u}(\mathbf{x})- \mathbf{u}(\bar{\mathbf{x}})\| \leq \frac{ c_g^2}{\sigma_f} \| \mathbf{x} - \bar{\mathbf{x}}\|,
\end{align*}
for all $\mathbf{x}, \bar{\mathbf{x}} \in  \rset^p_+$.
\qed
\end{proof}

\noindent
Note that in the case \an{of a linear mapping $\mathbf{g}$},
i.e., $\mathbf{g}(\mathbf{u}) = \mathbf{Gu} + \mathbf{g}$,
\an{we have} $c_g=\|\mathbf{G}\|_F \geq \|\mathbf{G}\|$. In conclusion,
\an{our estimate on the Lipschitz constant of the
gradient of the dual function for general convex constraints}
$L_{\text{d}} = c_g^2/\sigma_f$ can coincide with the one
derived in \cite{Nes:05} for the linear case $L_{\text{d}} =\|\mathbf{G}\|^2/\sigma_f$ \an{if one takes
the linear structure of $\mathbf{g}$ into account in the proof of Lemma~\ref{in:dual_lips}
(specifically, where we used H\"older's inequality)}.
Based \an{on relation \eqref{in:lipd} of Lemma~\ref{in:dual_lips},}
the following descent lemma holds with $L_{\text{d}} =
c_g^2/\sigma_f$ (see for example \cite{Nes:05}):
\begin{equation}\label{in:eq_descent}
d(\mathbf{x}) \ge d(\mathbf{y}) + \langle \nabla d(\mathbf{y}), \mathbf{x}-\mathbf{y}\rangle - \frac{L_{\text{d}}}{2}\norm{\mathbf{x}-\mathbf{y}}^2 \quad \forall \mathbf{x},\mathbf{y} \in \rset^{p}_+.
\end{equation}
Using these preliminary results, \an{in a unified manner},
we analyze further the computational complexity of inexact dual first order methods.


\section{Inexact dual first order methods}
\label{in:sec4} \noindent In this section we introduce and analyze
inexact \an{first order dual algorithms} for solving the general smooth
convex  problem \eqref{in:eq_prob_princc1}. Since the computation of
the zero-th and \an{the} first order information of the dual problem
\eqref{in:eq_dual_prob} requires the exact solution of the inner
subproblem $\min\limits_{\mathbf{u} \in U} \mathcal{L}(\mathbf{u},\mathbf{x})$ for some fixed
$\mathbf{x} \in \rset^{p}_+$, which generally cannot be computed in practice.
\an{In many practical cases,} inexact dual  information is available by
solving the inner subproblem with a certain inner accuracy.
\an{We} denote with $\tilde{\mathbf{u}}(\mathbf{x})$ the primal point satisfying
the $\delta$-optimality relations:
\begin{equation}\label{in:inexact_inner_opt}
 \tilde{\mathbf{u}}(\mathbf{x}) \in U, \qquad 0 \le \mathcal{L}(\tilde{\mathbf{u}}(\mathbf{x}),\mathbf{x}) - d(\mathbf{x}) \le \delta \qquad \forall \mathbf{x} \in \rset^{p}_+.
\end{equation}

\noindent In relation with \eqref{in:inexact_inner_opt}, we introduce
the following approximations for the dual function and its gradient:
\begin{equation*}
 \tilde{d}(\mathbf{x}) = \mathcal{L}(\tilde{\mathbf{u}}(\mathbf{x}),\mathbf{x}) \quad \text{and} \quad \tilde{\nabla} d(\mathbf{x}) = \mathbf{g}(\tilde{\mathbf{u}}(\mathbf{x})).
\end{equation*}
Then, the following bounds for the dual function $d(\mathbf{x})$ can be
obtained, in terms of a linear and a quadratic model, which use only
approximate information of the dual function and of its gradient
(see \cite[Lemma 2.5]{NecNed:14b}):
\begin{equation}\label{in:inexact_oracle}
 0 \le \left(\tilde{d}(\mathbf{y}) + \langle \tilde{\nabla} d(\mathbf{y}), \mathbf{x}-\mathbf{y} \rangle \right) -
 d(\mathbf{x}) \le L_d\norm{\mathbf{x}-\mathbf{y}}^2 + 3\delta \quad \forall \mathbf{x},\mathbf{y} \in \rset^p_+.
\end{equation}
Note that if $\delta=0$,
then we recover the exact  descent lemma \eqref{in:eq_descent}.
Before we introduce our algorithmic scheme, \an{let us} observe that
\an{one can efficiently solve approximately the inner subproblem} if
the constraint functions  $g_i(\cdot)$ satisfy certain assumptions,
\an{such as either one of the following conditions}:
\begin{itemize}
\item[(1)] \
The operator $\mathbf{g}(\cdot)$ is simple,  i.e.,  given $\mathbf{v} \in U$ and $\mathbf{x} \in \rset^p_+$,
the solution of the following optimization subproblem:
 $$ \min\limits_{\mathbf{u} \in U} \; \left\{\frac{1}{2}\norm{\mathbf{u}-\mathbf{v}}^2 + \langle \mathbf{x}, \mathbf{g}(\mathbf{u})\rangle \right\}$$
 can be obtained in linear time, i.e. $\mathcal{O}(n)$ operations. An example satisfying  this
 assumption is the linear operator, i.e. $\mathbf{g}(\mathbf{u}) = \mathbf{Gu} + \mathbf{g}$,
 where $\mathbf{G} \in \rset^{p \times n}, \mathbf{g} \in \rset^p$.
 \item[(2)] \
 Each function $g_i(\cdot)$ has Lipschitz  \an{continuous gradients}.
\end{itemize}
\an{In such cases, based on Assumption \ref{in:assump_all}(b) (i.e.
$f$ has Lipschitz continuous gradient), it follows that  we can
solve approximately} the inner subproblem $\min\limits_{\mathbf{u}
\in U} \mathcal{L}(\mathbf{u},\mathbf{x})$, for any fixed
$\mathbf{x} \in \rset^{p}_+$, with Nesterov's optimal method for
convex problems with smooth and strongly convex objective function
\cite{Nes:13}. Without \an{loss} of generality, we assume that the
functions $g_i(\cdot)$ are simple. \an{When $\mathbf{g}(\cdot)$
satisfies the above condition (2),} there are minor modifications in
the constants related to the convergence rate. Given $\mathbf{x} \in
\rset^p_+$, the inner approximate optimal point
$\tilde{\mathbf{u}}(\mathbf{x})$ satisfying
$\mathcal{L}(\tilde{\mathbf{u}}(\mathbf{x}),\mathbf{x}) -
d(\mathbf{x}) \le \delta$ is obtained with Nesterov's optimal method
\cite{Nes:13} \an{after $N_\delta$ projections on the simple set $U$
and evaluations of $\nabla f$, where
\begin{equation}\label{in:inner_complexity}
N_\delta= \left \lfloor \sqrt{\frac{L_f}{\sigma_f}} \log \left( \frac{ L_fR_p^2(\bx)}{2\delta} \right)\right \rfloor
\end{equation}
with $R_p(\bx) = \norm{\mathbf{v}^0 - \mathbf{u}(\mathbf{x})}$, and $\mathbf{v}^0$ being the initial  point
of Nesterov's optimal method.} \an{When the
simple feasible set $U$ is compact with a diameter $R_p$ (such as for example in
MPC applications)}, we can bound \an{$R_p(\mathbf{x})$ uniformly, i.e.,
\[R_p(\mathbf{x}) \le R_p\qquad\forall \mathbf{x} \in \rset^p_+.\]
In the sequel, {\it we always assume that such a bound exists}, and
{\it we use warm-start when solving the inner subproblem}.}
Now, we
introduce a general algorithmic scheme, called
Inexact Dual First Order Method (IDFOM), and analyze its convergence
properties, computational complexity and \an{numerical performance}.

\vspace{5pt}

\noindent \textbf{Algorithm IDFOM}

\noindent \begin{tabular}{p{\textwidth}} \hline

\vspace{3pt}

Given $\mathbf{y}^0 \in \rset^p_{+}, \;  \delta>0$, \;  for $k \geq 0$ compute:\\
1. \; Find $\mathbf{u}^k \in U$ such that $\mathcal{L}(\mathbf{u}^k,\mathbf{y}^k) - d(\mathbf{y}^k) \le \delta$, \\
2. \; Update ${\mathbf{x}}^{k}=\left[\mathbf{y}^k+ \frac{1}{2L_{\text{d}}} \tilde{\nabla} d(\mathbf{y}^k)\right]_+$,\\
3. \; Update $\mathbf{y}^{k+1} = \left(1- \theta_k \right)\mathbf{x}^k + \theta_k
\left[\mathbf{y}^0 + \frac{1}{2L_{\text{d}}}\sum\limits_{j=0}^k
\frac{j+1}{2}\tilde{\nabla} d(\mathbf{y}^j) \right]_+$.
\\
\hline
\end{tabular}

\vspace{5pt}

\noindent where $\mathbf{u}^k= \tilde{\mathbf{u}}(\mathbf{y}^k)$,  $\tilde{\nabla} d(\mathbf{y}^k) =
\mathbf{g}(\mathbf{u}^k)$ and \an{the selection of the parameter $\theta_k$ is discussed next}. More precisely, we
distinguish two particular well-known schemes of the above
framework:
\begin{itemize}
\item \textbf{IDGM}: by setting $\theta_k = 0$ \an{for all $k\ge0$}, we recover  the Inexact Dual
Gradient Method since $\mathbf{y}^{k+1} = \mathbf{x}^k$. For this scheme, we define
the dual average sequence $\hat{\mathbf{x}}^k = \frac{1}{k+1}
\sum\limits_{j=0}^k \mathbf{x}^j$. We redefine the dual  final point (the
dual last iterate $\mathbf{x}^k$ when some stopping criterion is satisfied)
as $\mathbf{x}^k = \left[\hat{\mathbf{x}}^k + \frac{1}{2L_{\text{d}}} \tilde{\nabla}
d(\hat{\mathbf{x}}^k) \right]_+$. Thus, all the results concerning $\mathbf{x}^k$
generated by the algorithm  \textbf{IDGM} will refer to this
definition.
\item \textbf{IDFGM}: by setting $\theta_k = \frac{2}{k+3}$ \an{for all $k\ge0$},
we recover the Inexact Dual Fast Gradient Method. This variant has
been analyzed in \cite{Nes:05, DevGli:11,NecNed:14b}.
\end{itemize}

\noindent Note that both dual sequences are dual feasible, i.e.,
$\mathbf{x}^k, \mathbf{y}^k \in \rset^p_+$ \an{for all $k\ge0$}, and thus the inner subproblem
$\min\limits_{\mathbf{u} \in U} \mathcal{L}(\mathbf{u},\mathbf{y}^k)$ has the objective
function strongly convex. Towards estimating the computational
complexity of \textbf{IDFOM}, we present an unified outer
convergence rate for both schemes \textbf{IDGM} and \textbf{IDFGM}
of algorithm \textbf{IDFOM} in terms of dual suboptimality. The
result has been proved in \cite{DevGli:11, NecNed:14b}.

\begin{theorem} \cite{DevGli:11,NecNed:14b}\label{in:th_dual_convrate_idfom}
Given $\delta>0$, let \an{$\{(\mathbf{x}^k,\mathbf{y}^k)\}_{k \ge 0}$} be the dual sequences
generated by algorithm \textbf{IDFOM}. Under Assumption
\ref{in:assump_all}, the following relation holds:
\begin{equation*}
 f^* - d(\mathbf{x}^k) \le \frac{L_{\text{d}} R_d^2}{k^{p(\theta)}} + 4k^{p(\theta)-1}\delta
 \an{\qquad\forall k\ge 1,}
\end{equation*}
where $p(\theta) =
\begin{cases} 1, & \text{if} \;\; \theta_k=0 \\ 2, & \text{if} \;\; \theta_k=\frac{2}{k+3}\end{cases}$ \quad and \quad
$R_d = \min\limits_{\mathbf{x}^* \in X^*}\;\norm{\mathbf{y}^0- \mathbf{x}^*}$.
\end{theorem}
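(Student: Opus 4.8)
The plan is to regard the pair $(\tilde d,\tilde\nabla d)$ from \eqref{in:inexact_oracle} as an inexact first-order oracle for the concave function $d$ (in the sense of \cite{DevGli:11}), with effective Lipschitz parameter $2L_{\text{d}}$ --- which is exactly what the stepsize $\tfrac{1}{2L_{\text{d}}}$ in Steps~2--3 of \textbf{IDFOM} is tuned to --- and inaccuracy $3\delta$, and then run the standard gradient / fast-gradient estimates. The one elementary building block is the ``prox inequality'' for a step $\mathbf{x}=\big[\mathbf{y}+\tfrac{1}{2L_{\text{d}}}\tilde\nabla d(\mathbf{y})\big]_+$ with $\mathbf{y}\in\rset^p_+$: since $\mathbf{x}$ maximizes over $\rset^p_+$ the $2L_{\text{d}}$-strongly concave quadratic model $m_{\mathbf{y}}(\mathbf{z})=\tilde d(\mathbf{y})+\langle\tilde\nabla d(\mathbf{y}),\mathbf{z}-\mathbf{y}\rangle-L_{\text{d}}\|\mathbf{z}-\mathbf{y}\|^2$, optimality of the Euclidean projection gives $m_{\mathbf{y}}(\mathbf{x})\ge m_{\mathbf{y}}(\mathbf{z})+L_{\text{d}}\|\mathbf{z}-\mathbf{x}\|^2$ for every $\mathbf{z}\in\rset^p_+$, and combining this with the two sides of \eqref{in:inexact_oracle} yields
\begin{equation*}
 d(\mathbf{x}) \;\ge\; d(\mathbf{z}) + L_{\text{d}}\|\mathbf{z}-\mathbf{x}\|^2 - L_{\text{d}}\|\mathbf{z}-\mathbf{y}\|^2 - 3\delta \qquad \forall \mathbf{z}\in\rset^p_+ .
\end{equation*}
Throughout, $\mathbf{x}^*$ denotes a dual optimizer attaining $R_d=\min_{\mathbf{x}^*\in X^*}\|\mathbf{y}^0-\mathbf{x}^*\|$, with $d(\mathbf{x}^*)=f^*$ by strong duality (Assumption~\ref{in:assump_all}(a)).

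For the gradient case $\theta_k=0$ (so $\mathbf{y}^{j}=\mathbf{x}^{j-1}$ for $j\ge1$), I would apply the building block at $\mathbf{y}=\mathbf{y}^j$, $\mathbf{x}=\mathbf{x}^j$ with the test point $\mathbf{z}=\mathbf{x}^*$ to get $f^*-d(\mathbf{x}^j)\le L_{\text{d}}\big(\|\mathbf{x}^*-\mathbf{y}^j\|^2-\|\mathbf{x}^*-\mathbf{x}^j\|^2\big)+3\delta$. Summing over the iterations the right-hand sides telescope down to $L_{\text{d}}\|\mathbf{x}^*-\mathbf{y}^0\|^2\le L_{\text{d}}R_d^2$, so dividing by the number of terms and invoking concavity of $d$ (Jensen) for the running average $\hat{\mathbf{x}}^k$ of \textbf{IDGM} gives $f^*-d(\hat{\mathbf{x}}^k)\le \tfrac{L_{\text{d}}R_d^2}{k}+\mathcal{O}(\delta)$; the final one-step redefinition $\mathbf{x}^k=[\hat{\mathbf{x}}^k+\tfrac{1}{2L_{\text{d}}}\tilde\nabla d(\hat{\mathbf{x}}^k)]_+$ costs only one more $\delta$-term by the building block applied at $\mathbf{y}=\mathbf{z}=\hat{\mathbf{x}}^k$, and a careful bookkeeping of all the $\delta$-terms yields the stated bound with $p(\theta)=1$ and the constant $4$.

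For the fast-gradient case $\theta_k=\tfrac{2}{k+3}$, I would build an estimate (Lyapunov) sequence with weights $\lambda_j=\tfrac{j+1}{2}$, $\Lambda_k=\sum_{j=0}^{k}\lambda_j=\Theta(k^2)$, and the auxiliary point $\mathbf{z}^k=\big[\mathbf{y}^0+\tfrac{1}{2L_{\text{d}}}\sum_{j=0}^{k}\lambda_j\,\tilde\nabla d(\mathbf{y}^j)\big]_+$ that appears in Step~3, and prove by induction on $k$ that
\begin{equation*}
 \Lambda_k\, d(\mathbf{x}^k)\;\ge\;\max_{\mathbf{z}\in\rset^p_+}\Big\{\sum_{j=0}^{k}\lambda_j\big(\tilde d(\mathbf{y}^j)+\langle\tilde\nabla d(\mathbf{y}^j),\mathbf{z}-\mathbf{y}^j\rangle\big)-L_{\text{d}}\|\mathbf{z}-\mathbf{y}^0\|^2\Big\}-E_k ,
\end{equation*}
where $E_k$ is an accumulated error of order $\delta\sum_{j\le k}\lambda_j$. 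The induction step is exactly where the identity $\theta_k=\lambda_{k+1}/\Lambda_{k+1}$, the convex-combination rule $\mathbf{y}^{k+1}=(1-\theta_k)\mathbf{x}^k+\theta_k\mathbf{z}^k$, the building-block inequality at $\mathbf{y}=\mathbf{y}^{k+1}$, and the $\mathbf{z}^k\to\mathbf{z}^{k+1}$ update are combined. Lower-bounding the maximum by plugging in $\mathbf{z}=\mathbf{x}^*$ (using the left inequality of \eqref{in:inexact_oracle} to replace each linear model by $d(\mathbf{x}^*)=f^*$) and dividing by $\Lambda_k=\Theta(k^2)$ gives $f^*-d(\mathbf{x}^k)=\mathcal{O}(L_{\text{d}}R_d^2/k^2)+\mathcal{O}(k\delta)$, i.e. the bound with $p(\theta)=2$. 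Thus both schemes are covered by the single exponent $p(\theta)$.

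I expect the main obstacle to be the error bookkeeping rather than the convex-analysis core: one must track how the per-step inaccuracy $3\delta$ in \eqref{in:inexact_oracle} propagates through the telescoping sum (gradient case) and through the weighted estimate sequence (fast-gradient case, where it is amplified to $\Theta(k\delta)$), and do so tightly enough to land on the clean constants $L_{\text{d}}R_d^2$ and $4$ claimed in the statement. A secondary technical point is that every iterate $\mathbf{x}^k,\mathbf{y}^k$ and the accumulated-gradient point $\mathbf{z}^k$ live on $\rset^p_+$, so each use of the building block must invoke optimality of the projection onto $\rset^p_+$ rather than an unconstrained gradient identity.
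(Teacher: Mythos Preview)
Your proposal is correct and matches the paper's approach: both rest on the inexact-oracle inequalities \eqref{in:inexact_oracle} and the analysis of \cite{DevGli:11}. The paper's proof is in fact much terser than yours. For \textbf{IDGM} it simply cites \cite[Theorem~2]{DevGli:11} to obtain the average-iterate bound $f^*-d(\hat{\mathbf{x}}^k)\le \tfrac{L_{\text{d}}R_d^2}{2k}+\delta$ (relation \eqref{in:rate_average}), then performs one extra descent step \eqref{in:descent_step}---which is exactly your building block evaluated at $\mathbf{z}=\mathbf{y}=\hat{\mathbf{x}}^k$---to pass to the redefined $\mathbf{x}^k$ at the cost of $3\delta$. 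For \textbf{IDFGM} the paper defers entirely to \cite{DevGli:11,NecNed:14b}; the weighted estimate-sequence inequality you propose to prove by induction is precisely relation \eqref{in:nesterov_relation}, which the paper states (without proof) for later use.

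One caveat on the constants in the \textbf{IDGM} case: your telescoping-plus-Jensen derivation accrues $3\delta$ per step, so the average bound comes out as $\tfrac{L_{\text{d}}R_d^2}{k+1}+3\delta$, and after the extra descent step you land on $6\delta$, not $4\delta$. The paper gets $4\delta$ only because the cited result \eqref{in:rate_average} carries the sharper $+\delta$; your sketch as written does not recover that, so ``careful bookkeeping'' alone will not close the gap---you would need either to invoke \cite{DevGli:11} directly (as the paper does) or to redo the per-step error accounting more tightly than the building-block inequality allows.
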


\begin{proof}
Firstly, consider the case $\theta_k = 0$ (which implies $p(\theta)= 1$).
Note that the approximate convexity and Lipschitz continuity
relations \eqref{in:inexact_oracle} lead to:
\begin{align}\label{in:descent_step}
d(\mathbf{x}^{k}) &\ge \tilde{d}(\hat{\mathbf{x}}^{k})  + \langle
\tilde{\nabla} d(\hat{\mathbf{x}}^{k}),
\mathbf{x}^{k}-\hat{\mathbf{x}}^{k}\rangle
- L_{\text{d}}\norm{\mathbf{x}^{k} - \hat{\mathbf{x}}^{k}}^2 - 3\delta \nonumber \\
& \ge \tilde{d}(\hat{\mathbf{x}}^{k}) +
L_{\text{d}}\norm{\mathbf{x}^{k}-\hat{\mathbf{x}}^{k}}^2 -
3\delta\cr & \overset{\eqref{in:inexact_inner_opt}}{\ge}
d(\hat{\mathbf{x}}^{k}) - 3\delta,
\end{align}
where in the second inequality we have used the optimality
conditions of   $\mathbf{x}^k = [\hat{\mathbf{x}}^k + \frac{1}{2L_{\text{d}}}
\tilde{\nabla} d(\hat{\mathbf{x}}^k) ]_+ \in \rset^p_+$. On the other hand,
using \cite[Theorem 2]{DevGli:11}, \an{the following
convergence rate for the dual average point $\hat{\mathbf{x}}^k$ can be derived}:
\begin{equation}\label{in:rate_average}
 f^* - d(\hat{\mathbf{x}}^k) \le \frac{L_{\text{d}} R_d^2}{2k} + \delta
 \an{\qquad\forall k\ge 1}.
\end{equation}
Combining \eqref{in:descent_step} and \eqref{in:rate_average} we \an{obtain} the
first case of the theorem. \an{The second case, concerning $\theta_k =
\frac{2}{k+3}$, has been shown} in
\cite{DevGli:11, NecNed:14b}. \qed
\end{proof}


\noindent Our iteration complexity analysis for algorithm {\bf
IDFOM} is based on two types of approximate primal solutions: the
primal last iterate sequence $(\mathbf{v}^k)_{k \geq 0}$ defined by
$\mathbf{v}^k = \tilde{\mathbf{u}}(\mathbf{x}^k)$ or a primal
average sequence $(\hat{\mathbf{u}}^k)_{k \geq 0}$ of the form:
\begin{align}
\label{in:av_idg}
\hat{\mathbf{u}}^k = \begin{cases} \frac{1}{k+1}\sum\limits_{j=0}^k \mathbf{u}^j, & \text{if} \;\; \textbf{IDGM} \\
  \an{\frac{2}{(k+1)(k+2)}\sum\limits_{j=0}^k (j+1)\mathbf{u}^j}, & \text{if} \;\; \textbf{IDFGM}. \end{cases}
\end{align}
Note that for algorithm \textbf{IDGM} we have $\mathbf{v}^k=\mathbf{u}^k$, while for
algorithm \textbf{IDFGM} $\mathbf{v}^k \not= \mathbf{u}^k$.  Without \an{loss} of
generality, for the simplicity of our results, we assume:
\begin{equation}\label{in:assump_simple}
\mathbf{y}^0 = 0,  \qquad R_d \geq \max \left\{ 1, \frac{1}{c_g},
\frac{\an{L_f} }{c_g} \right\}, \qquad L_{\text{d}} \geq 1.
\end{equation}
If any of these conditions do not hold, then all of the results from
below are valid with minor variations in the constants.



\subsection{Computational complexity of IDFOM in primal last iterate}
\label{in:sec_dfglast} \noindent In this section we derive the
computational complexity for the two main algorithms  \an{within the
framework of} \textbf{IDFOM}, in terms of primal feasibility
violation and primal suboptimality for the last primal iterate
$\mathbf{v}^k = \tilde{\mathbf{u}}(\mathbf{x}^k)$. To obtain these
results, only in this section, we additionally make the following
assumption:

\begin{assumption}\label{in:assump_U_bounded}
The primal set $U$ is compact, i.e. $\max\limits_{\mathbf{u},\mathbf{v} \in U}
\norm{\mathbf{u} - \mathbf{v}} = R_p < \infty$.
\end{assumption}
Assumption \ref{in:assump_U_bounded} implies that the objective
function $f$ is Lipschitz continuous with constant $\bar{L}_f$,
where $\bar{L}_f = \max\limits_{\mathbf{u} \in U} \; \norm{\nabla f(\mathbf{u})}$.
Now, we are ready to prove the main result of this section, \an{given in the following theorem}.

\begin{theorem}\label{in:th_last}
Let $\epsilon> 0$ be some desired accuracy and $\mathbf{v}^k =
\tilde{\mathbf{u}}(\mathbf{x}^k)$ be the primal last iterate
generated by algorithm \textbf{IDFOM}. Under Assumptions
\ref{in:assump_all} and \ref{in:assump_U_bounded}, by setting:
\begin{equation}\label{in:delta_idfom_last}
 \delta \le \frac{L_{\text{d}}R_d^2 }{2\alpha^{p(\theta) - 1}} \left( \frac{\epsilon}{6 L_{\text{d}} R_d^{2}}\right)^{4 - 2/p(\theta)} ,
\end{equation}
where $\alpha = \max \left\{1, \left(\frac{\bar{L}_f}{c_g R_d}
\right)^{2/p(\theta)} \right\}$, the following assertions hold:
\begin{enumerate}
\item[$(i)$] The primal iterate $\mathbf{v}^k$ is $\epsilon$-optimal after
$ \left\lfloor \alpha\left( \frac{6 L_{\text{d}}
R_d^2}{\epsilon}\right)^{2/p(\theta)} \right\rfloor$ outer
iterations.

\item[$(ii)$] Assuming that the primal iterate $\mathbf{v}^k$ is obtained with
\an{Nesterov's optimal method \cite{Nes:13} applied to} the subproblem
$\min\limits_{\mathbf{u} \in U} \; \mathcal{L}(\mathbf{u}, \mathbf{x}^k)$, then $\mathbf{v}^k$ is
$\epsilon$-optimal after
\begin{align*}
\left\lfloor  \! \sqrt{\frac{L_f}{\sigma}} \! \left( \frac{6
L_{\text{d}} R_d^2}{\epsilon}\right)^{\frac{2}{p(\theta)}}
\!\left[\! \left( 4 - \frac{2}{p(\theta)}\right) \log \left( \frac{6
L_{\text{d}} R_d^2}{\epsilon}\right)
  + \log \left( \frac{L_f R_p^2 \alpha^{p(\theta)-1}}{L_{\text{d}} R_d^2} \! \right) \right]  \right\rfloor
\end{align*}
total number of projections on the primal simple  set $U$ and
evaluations of $\nabla f$.
\end{enumerate}
\end{theorem}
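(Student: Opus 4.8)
The plan is to turn the dual suboptimality estimate of Theorem~\ref{in:th_dual_convrate_idfom} into primal feasibility and suboptimality bounds for $\mathbf{v}^k=\tilde{\mathbf{u}}(\mathbf{x}^k)$, and then to substitute the prescribed inner accuracy $\delta$ and outer horizon $k$ and verify the three $\epsilon$-optimality inequalities; part~$(ii)$ then follows by counting the inner projections.

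First I would establish the feasibility bound $\left\|\left[\mathbf{g}(\mathbf{v}^k)\right]_+\right\|\le 2\sqrt{L_{\text{d}}\bigl(f^*-d(\mathbf{x}^k)+3\delta\bigr)}$. To this end I introduce one extra inexact projected gradient step from $\mathbf{x}^k$, namely $\bar{\mathbf{x}}^k=\bigl[\mathbf{x}^k+\tfrac{1}{2L_{\text{d}}}\tilde{\nabla} d(\mathbf{x}^k)\bigr]_+$, using the gradient $\tilde{\nabla} d(\mathbf{x}^k)=\mathbf{g}(\mathbf{v}^k)$ which is already available from $\mathbf{v}^k$. The projection residual $\mathbf{r}^k=\bar{\mathbf{x}}^k-\mathbf{x}^k-\tfrac{1}{2L_{\text{d}}}\mathbf{g}(\mathbf{v}^k)$ is componentwise nonnegative and complementary to $\bar{\mathbf{x}}^k$; hence $\mathbf{g}(\mathbf{v}^k)\le 2L_{\text{d}}(\bar{\mathbf{x}}^k-\mathbf{x}^k)$ coordinatewise and $\left\|\left[\mathbf{g}(\mathbf{v}^k)\right]_+\right\|\le 2L_{\text{d}}\norm{\bar{\mathbf{x}}^k-\mathbf{x}^k}$. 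Applying \eqref{in:inexact_oracle} with $\mathbf{y}=\mathbf{x}^k$, $\mathbf{x}=\bar{\mathbf{x}}^k$ together with the (strongly convex) optimality of the projection defining $\bar{\mathbf{x}}^k$ and $\tilde d(\mathbf{x}^k)\ge d(\mathbf{x}^k)$ from \eqref{in:inexact_inner_opt} yields $d(\bar{\mathbf{x}}^k)\ge d(\mathbf{x}^k)+L_{\text{d}}\norm{\bar{\mathbf{x}}^k-\mathbf{x}^k}^2-3\delta$; since $d(\bar{\mathbf{x}}^k)\le f^*$, this gives $L_{\text{d}}\norm{\bar{\mathbf{x}}^k-\mathbf{x}^k}^2\le f^*-d(\mathbf{x}^k)+3\delta$, hence the claimed bound.

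For primal suboptimality I would proceed as follows. For the lower bound, strong duality and $\mathbf{v}^k\in U$ give $f^*=d(\mathbf{x}^*)\le\mathcal{L}(\mathbf{v}^k,\mathbf{x}^*)=f(\mathbf{v}^k)+\langle\mathbf{x}^*,\mathbf{g}(\mathbf{v}^k)\rangle$; taking $\mathbf{x}^*$ of minimal norm and using $\mathbf{y}^0=\b0$ in \eqref{in:assump_simple} so that $\norm{\mathbf{x}^*}=R_d$, we get $f^*-f(\mathbf{v}^k)\le R_d\left\|\left[\mathbf{g}(\mathbf{v}^k)\right]_+\right\|$. For the upper bound, $\mathcal{L}(\mathbf{v}^k,\mathbf{x}^k)-d(\mathbf{x}^k)\le\delta$ and $d(\mathbf{x}^k)\le f^*$ give $f(\mathbf{v}^k)-f^*\le\delta-\langle\mathbf{x}^k,\mathbf{g}(\mathbf{v}^k)\rangle$, and the cross term is controlled through the same auxiliary step by $-\langle\mathbf{x}^k,\mathbf{g}(\mathbf{v}^k)\rangle\le 2L_{\text{d}}\bigl(\norm{\mathbf{x}^k}+\norm{\mathbf{r}^k}\bigr)\norm{\bar{\mathbf{x}}^k-\mathbf{x}^k}$, which is small once the dual iterates are known to be bounded uniformly in $k$: indeed $d(\mathbf{x}^k)\le\mathcal{L}(\bar{\mathbf{u}},\mathbf{x}^k)\le f(\bar{\mathbf{u}})-\bigl(\min_i(-g_i(\bar{\mathbf{u}}))\bigr)\norm{\mathbf{x}^k}_1$ from the Slater point of Assumption~\ref{in:assump_all}(a), combined with the fixed lower bound on $d(\mathbf{x}^k)$ from Theorem~\ref{in:th_dual_convrate_idfom} (for \textbf{IDGM} one may alternatively invoke Fej\'er monotonicity of the inexact gradient iterates). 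The constant $\bar{L}_f$, the Lipschitz constant of $f$ on the compact set $U$ (a consequence of Assumption~\ref{in:assump_U_bounded}), enters here when passing from $f(\mathbf{v}^k)$ to the value $f(\mathbf{u}(\mathbf{x}^k))$ at the exact inner solution, via $\norm{\mathbf{v}^k-\mathbf{u}(\mathbf{x}^k)}\le\sqrt{2\delta/\sigma_f}$ coming from strong convexity of $\mathcal{L}(\cdot,\mathbf{x}^k)$.

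Finally, substituting $k=\bigl\lfloor\alpha(6L_{\text{d}}R_d^2/\epsilon)^{2/p(\theta)}\bigr\rfloor$ and the $\delta$ of \eqref{in:delta_idfom_last} into $f^*-d(\mathbf{x}^k)\le L_{\text{d}}R_d^2/k^{p(\theta)}+4k^{p(\theta)-1}\delta$, both summands collapse to multiples of $\epsilon^2/(L_{\text{d}}R_d^2)$, and using $\alpha,L_{\text{d}},R_d\ge1$ (and $\epsilon\le 6L_{\text{d}}R_d^2$, which is the only nontrivial regime) one obtains $f^*-d(\mathbf{x}^k)+3\delta\le\epsilon^2/(8L_{\text{d}}R_d^2)$; the feasibility bound then gives $\left\|\left[\mathbf{g}(\mathbf{v}^k)\right]_+\right\|\le\epsilon/(\sqrt2\,R_d)\le\epsilon$ and the two suboptimality bounds give $|f(\mathbf{v}^k)-f^*|\le\epsilon$, the factor $\alpha=\max\{1,(\bar{L}_f/(c_gR_d))^{2/p(\theta)}\}$ being precisely the extra horizon needed so that the $\bar{L}_f$-contribution to the upper bound is also $\le\epsilon$. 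This proves $(i)$. For $(ii)$, each outer iteration solves $\min_{\mathbf{u}\in U}\mathcal{L}(\mathbf{u},\mathbf{x}^k)$ to accuracy $\delta$ by Nesterov's method at cost $N_\delta$ projections and evaluations of $\nabla f$ as in \eqref{in:inner_complexity} with $R_p(\mathbf{x})\le R_p$; multiplying the outer count from $(i)$ by $N_\delta$ and expanding $\log\!\bigl(L_fR_p^2/(2\delta)\bigr)$ for the chosen $\delta$ reproduces the stated total. The main obstacle is the suboptimality upper bound: one must secure the uniform boundedness of $\mathbf{x}^k$ (for \textbf{IDFGM} without Fej\'er monotonicity) and estimate the cross term $\langle\mathbf{x}^k,\mathbf{g}(\mathbf{v}^k)\rangle$, and then carry out the bookkeeping confirming that the precise exponent $4-2/p(\theta)$ and the factor $\alpha$ make all three $\epsilon$-inequalities hold at the same horizon.
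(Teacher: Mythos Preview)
Your feasibility argument via the auxiliary projected gradient step $\bar{\mathbf{x}}^k=[\mathbf{x}^k+\tfrac{1}{2L_{\text d}}\mathbf{g}(\mathbf{v}^k)]_+$ is a correct and elegant alternative to the paper's route. The paper instead works entirely on the primal side: from strong convexity of $\mathcal{L}(\cdot,\mathbf{x})$ it gets $\tfrac{\sigma_f}{2}\|\mathbf{u}(\mathbf{x})-\mathbf{u}^*\|^2\le f^*-d(\mathbf{x})$ and $\|\tilde{\mathbf{u}}(\mathbf{x})-\mathbf{u}(\mathbf{x})\|\le\sqrt{2\delta/\sigma_f}$, then uses the Lipschitz property \eqref{in:lip_g} of $\mathbf{g}$ together with $\mathbf{g}(\mathbf{u}^*)\le\b0$ to obtain $\|[\mathbf{g}(\mathbf{v}^k)]_+\|\le\sqrt{2L_{\text d}\delta}+\sqrt{2L_{\text d}(f^*-d(\mathbf{x}^k))}$. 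Both bounds are of the same order; yours is a ``gradient--mapping'' argument on the dual, the paper's is a distance argument on the primal. The lower suboptimality bound you give coincides with the paper's.

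The genuine difficulty in your plan is the \emph{upper} suboptimality bound. Your route $f(\mathbf{v}^k)-f^*\le\delta-\langle\mathbf{x}^k,\mathbf{g}(\mathbf{v}^k)\rangle$ forces you to control $\|\mathbf{x}^k\|$ uniformly in $k$; for \textbf{IDFGM} this is exactly the obstacle you flag, and the Slater-based bound you invoke would inject the constants $f(\bar{\mathbf{u}})$ and $\min_i(-g_i(\bar{\mathbf{u}}))$ into the final estimate rather than $\bar L_f$. Your sentence about $\bar L_f$ entering ``when passing from $f(\mathbf{v}^k)$ to $f(\mathbf{u}(\mathbf{x}^k))$'' only accounts for a $\bar L_f\sqrt{2\delta/\sigma_f}$ correction and does not yield the main term; in particular it cannot reproduce the factor $\alpha=\max\{1,(\bar L_f/(c_gR_d))^{2/p(\theta)}\}$ that appears in the statement. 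The paper avoids all of this by a single line that uses Assumption~\ref{in:assump_U_bounded} directly:
\[
f(\mathbf{v}^k)-f^*\le \bar L_f\|\mathbf{v}^k-\mathbf{u}^*\|
\le \bar L_f\bigl(\|\mathbf{v}^k-\mathbf{u}(\mathbf{x}^k)\|+\|\mathbf{u}(\mathbf{x}^k)-\mathbf{u}^*\|\bigr)
\le \bar L_f\Bigl(\sqrt{\tfrac{2\delta}{\sigma_f}}+\sqrt{\tfrac{2}{\sigma_f}(f^*-d(\mathbf{x}^k))}\Bigr),
\]
which combines with Theorem~\ref{in:th_dual_convrate_idfom} to give the $\bar L_f c_g/\sigma_f$-dependent rate and hence exactly the $\alpha$ in \eqref{in:delta_idfom_last}. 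If you want to recover the theorem as stated (with that specific $\alpha$), replace your cross-term/Slater argument by this Lipschitz bound on $f$; with it no boundedness of the dual iterates is needed and both \textbf{IDGM} and \textbf{IDFGM} are handled uniformly. Your treatment of part~$(ii)$ is the same as the paper's.
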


\begin{proof}
$(i)$  From Assumption \eqref{in:assump_all}$(b)$, the Lagrangian
$\mathcal{L}(\mathbf{u},\mathbf{x})$ is $\sigma_f$-strongly convex in the variable $\mathbf{u}$
for any fixed $\mathbf{x} \in \rset^p_{+}$, which gives the following
inequality \cite{Nes:04}:
\begin{equation}\label{in:dist_lagr}
\mathcal{L}(\mathbf{u},\mathbf{x}) \ge {d}(\mathbf{x}) + \frac{\sigma_f}{2}\norm{\mathbf{u}(\mathbf{x})-\mathbf{u}}^2
\quad \forall \mathbf{u} \in U, \mathbf{x} \in \rset^p_{+}.
\end{equation}
\an{Moreover, under the strong convexity assumption on $f$ (cf.~Assumption \eqref{in:assump_all}$(b)$), the primal problem
\eqref{in:eq_prob_princc1} has a unique optimal solution, denoted by $\mathbf{u}^*$.}
Using \an{the fact that} $\langle \mathbf{x}, \mathbf{g}(\mathbf{u}^*) \rangle \leq 0$ for any $\mathbf{x} \geq 0$, we have:
\begin{align}
\mathcal{L}(\mathbf{u}^*,\mathbf{x}) - d(\mathbf{x})  &= f(\mathbf{u}^*)+ \langle \mathbf{x}, \mathbf{g}(\mathbf{u}^*) \rangle - d(\mathbf{x})  \leq
f^* - d(\mathbf{x}) \quad  \forall \mathbf{x} \in \rset^p_{+} \label{in:ineq_x}.
\end{align}
Combining \eqref{in:ineq_x} and \eqref{in:dist_lagr} we obtain \an{the following relation
\begin{equation}\label{in:dist_primal_dual}
 \frac{\sigma_f}{2}\norm{\mathbf{u}(\mathbf{x}) - \mathbf{u}^*}^2 \le f^* - d(\mathbf{x}) \qquad \forall \mathbf{x} \in \rset^p_+,
\end{equation}
which provides the distance from $\mathbf{u}(\mathbf{x})$ to the unique optimal solution
$\mathbf{u}^*$.} 

On the other hand, taking $\mathbf{u} = \tilde{\mathbf{u}}(\mathbf{x})$ in \eqref{in:dist_lagr}
and using \eqref{in:inexact_inner_opt}, we have:
\begin{equation}\label{in:dist_inexact_dual}
  \norm{\mathbf{g}(\tilde{\mathbf{u}}(\mathbf{x})) - \mathbf{g}(\mathbf{u}(\mathbf{x}))}
  \le c_g \norm{\mathbf{u}(\mathbf{x}) - \tilde{\mathbf{u}}(\mathbf{x})} \overset{\eqref{in:dist_lagr}}{\le} \sqrt{2 L_{\text{d}} \delta},
\end{equation}
where we used that $L_{\text{d}} = c_g^2/\sigma_f$. From
\eqref{in:dist_primal_dual} and \eqref{in:dist_inexact_dual}, we derive a
link between the primal infeasibility violation and dual
suboptimality gap. Indeed, using the Lipschitz continuity property
of $\mathbf{g}$, we get:
\begin{align*}
\norm{\mathbf{g}(\tilde{\mathbf{u}}(\mathbf{x})) - \mathbf{g}(\mathbf{u}^*)} &\le
\norm{\mathbf{g}(\tilde{\mathbf{u}}(\mathbf{x})) - \mathbf{g}(\mathbf{u}(\mathbf{x})) } + \norm{ \mathbf{g}(\mathbf{u}(\mathbf{x})) -\mathbf{g}(\mathbf{u}^*)} \\
&\overset{\eqref{in:dist_primal_dual} \an{\,\&\,} \eqref{in:dist_inexact_dual}}{\le}
\sqrt{2L_{\text{d}} \delta} + \sqrt{2L_{\text{d}}  (f^* - d(\mathbf{x}))}
\qquad \forall \mathbf{x} \in \rset^p_{+}.
\end{align*}
Combining the above inequality with the property \an{$\mathbf{g}(\mathbf{u}^*) \le \b0$,} and
the fact that for any $\mathbf{a} \in \rset^p$ and $\mathbf{b} \in \rset^p_+$ we have
$\norm{\mathbf{a} + \mathbf{ b} } \ge \norm{[\mathbf{a}]_+}$, we obtain:
\begin{align}\label{in:primal_feas_dual_subopt}
 \left\| \left[ \mathbf{g}(\tilde{\mathbf{u}}(\mathbf{x}))\right]_+ \right\| \le
 \sqrt{2L_{\text{d}} \delta} + \sqrt{2L_{\text{d}}  (f^* - d(\mathbf{x}))}
 \qquad \forall \mathbf{x} \in \rset^p_+.
\end{align}

\noindent Secondly, we find a link between the primal and dual
suboptimality. Indeed, using $\langle \mathbf{x}^*, \mathbf{g}(\mathbf{u}^*)\rangle = 0$, we
have for all $\mathbf{x} \in \rset^p_+$:
\begin{align*}
f^* = \langle \mathbf{x}^*, \mathbf{g}(\mathbf{u}^*) \rangle + f(\mathbf{u}^*)
= \min_{\mathbf{u} \in U} \an{\left \{ f(\mathbf{u}) + \langle \mathbf{x}^*, \mathbf{g}(\mathbf{u}) \rangle\right\} }
\leq f(\tilde{\mathbf{u}}(\mathbf{x})) + \langle \mathbf{x}^*,
\mathbf{g}(\tilde{\mathbf{u}}(\mathbf{x})) \rangle.
\end{align*}
Further, using the \an{Cauchy-Schwarz} inequality, we derive:
\begin{align} \label{in:subopt_primal_dual}
f(\tilde{\mathbf{u}}(\mathbf{x})) - f^*  & \geq - \|\mathbf{x}^*\| \| \mathbf{g}(\mathbf{u}^*) - \mathbf{g}(\tilde{\mathbf{u}}(\mathbf{x})) \| \nonumber \\
& \ge - R_d \left( \sqrt{2L_{\text{d}} \delta} + \sqrt{2L_{\text{d}}
(f^* - d(\mathbf{x}))}\right) \qquad \forall \mathbf{x} \in \rset^p_+.
\end{align}

\noindent On the other hand, from Assumption \ref{in:assump_U_bounded},
we obtain:
\begin{align}\label{in:primal_opt_right}
 f(\an{\tilde{\mathbf{u}} }(\mathbf{x})) - f^* &\le \bar{L}_f \norm{\tilde{\mathbf{u}}(\mathbf{x}) - \mathbf{u}^*}  \le
 \bar{L}_f \left(\norm{\tilde{\mathbf{u}}(\mathbf{x}) - \mathbf{u}(\mathbf{x})} + \norm{\mathbf{u}(\mathbf{x}) - \mathbf{u}^*} \right) \nonumber\\
 & \overset{\eqref{in:dist_primal_dual} \an{\,\&\,} \eqref{in:dist_inexact_dual}} {\le} \bar{L}_f
 \left(\sqrt{\frac{2\delta}{\sigma_f}}  + \sqrt{\frac{2}{\sigma_f}(f^* - d(\mathbf{x})) }  \right).
\end{align}

\noindent Taking $\mathbf{x} = \mathbf{x}^k$ in relation
\eqref{in:primal_feas_dual_subopt} and combining with the dual
convergence rate from Theorem \ref{in:th_dual_convrate_idfom},  we
obtain a convergence estimate on primal infeasibility:
\begin{align*}
\left\| \left[ \mathbf{g} (\mathbf{v}^k)\right]_+ \right\| &\le \frac{2L_{\text{d}}
R_d}{k^{p(\theta)/2}} + \left( 8 L_{\text{d}} k^{p(\theta) - 1}
\delta \right)^{1/2} + (2L_{\text{d}} \delta)^{1/2}.
 \end{align*}

\noindent \an{Letting} $\mathbf{x} = \mathbf{x}^k$ in relations  \eqref{in:subopt_primal_dual}
and \eqref{in:primal_opt_right} and combining with the dual convergence
rate from Theorem \ref{in:th_dual_convrate_idfom},  we obtain
convergence estimates on primal suboptimality:
\begin{align*}
- \frac{2L_{\text{d}} R_d^2 }{k^{p(\theta)/2}} - & \left( 8
L_{\text{d}}R_d^2  k^{p(\theta) - 1} \delta \right)^{1/2}  -
\left( 2L_{\text{d}}R_d^2 \delta \right)^{1/2}  \le f(\mathbf{v}^k) - f^* \\
& \qquad \qquad  \le  \frac{ 2 \bar{L}_f c_g R_d} {\sigma_f
k^{p(\theta)/2}} + \bar{L}_f \left(\frac{8k^{p(\theta)-1} \delta
}{\sigma_f}\right)^{1/2} + \bar{L}_f\left( \frac{2\delta}{\sigma_f}
\right)^{1/2}.
 \end{align*}
Enforcing $\mathbf{v}^k$ to be primal  $\epsilon$-optimal \an{solution in the two
preceding primal convergence rate estimates, we obtain the stated result}.

\noindent $(ii)$ At each outer iteration $k \ge 0$, by combining the
bound \eqref{in:delta_idfom_last} with the inner complexity
\eqref{in:inner_complexity}, Nesterov's optimal method
\cite{Nes:13} for computing $\mathbf{v}^k$ requires:
\begin{equation*}
 \left\lfloor \left( 4 - \frac{2}{p(\theta)}\right)  \sqrt{\frac{L_f}{\sigma}}
 \log \left( \frac{6 L_{\text{d}} R_d^2}{\epsilon}\right)  + \sqrt{\frac{L_f}{\sigma_f}}
 \log \left( \frac{L_f R_p^2}{L_{\text{d}} R_d^2} \alpha^{p(\theta)-1} \right)\right\rfloor
\end{equation*}
projections on the set $U$ and evaluations of $\nabla f$.
Multiplying with the outer complexity given in \an{part $(i)$, we obtain the
result}.   \qed
\end{proof}

\noindent Thus,  we obtained computational complexity estimates for
primal infeasibility and  suboptimality  for the last primal iterate
$\mathbf{v}^k$ of order $\mathcal{O}(\frac{1}{\epsilon^2} \log
\frac{1}{\epsilon})$ for the scheme \textbf{IDGM} and of order
$\mathcal{O}(\frac{1}{\epsilon} \log \frac{1}{\epsilon})$ for the
scheme \textbf{IDFGM}. Furthermore, the inner subproblem needs to be
solved with the inner accuracy $\delta$ of order
$\mathcal{O}(\epsilon^2)$ for  \textbf{IDGM} and of order
$\mathcal{O}(\epsilon^3)$ for  \textbf{IDFGM} in order \an{for the
last primal iterate  $\mathbf{v}^k =
\tilde{\mathbf{u}}(\mathbf{x}^k)$ to be} an $\epsilon$-optimal
primal solution.


\subsection{Computational complexity of IDFOM in primal average iterate}
In this section, we analyze the computational complexity of algorithm
\textbf{IDFOM} in the primal average sequence $\hat{\mathbf{u}}^k$ defined by
\eqref{in:av_idg}. Similar derivations were given in
\cite{NecNed:14b}. For completeness, we also briefly review these
results.   Since the average sequence is different for the two
particular algorithms  \textbf{IDGM} and \textbf{IDFGM}, we provide
separate results. First, we analyze the particular scheme
\textbf{IDGM}, i.e., in \textbf{IDFOM} we choose $\theta_k = 0$ for
all $k \ge 0$. Then, we have  the identity  $\mathbf{y}^{k+1} = \mathbf{x}^k$ and do not assume anymore the
redefinition of the last point
$\mathbf{x}^k = [\hat{\mathbf{x}}^k + \frac{1}{2L_{\text{d}}}\tilde{\nabla} d(\hat{\mathbf{x}}^k)]_+$, i.e.,
algorithm \textbf{IDGM} generates one sequence \an{$\{\mathbf{x}^k\}$} using
the classical gradient update.

\begin{theorem}\label{in:th_idg_av}
Let $\epsilon>0$ and $\mathbf{u}^k = \tilde{\mathbf{u}}(\mathbf{x}^k)$ be the primal sequence
generated by the algorithm \textbf{IDGM} (i.e.  $\theta_k = 0$ for
all $k \ge 0$). Under  Assumption \ref{in:assump_all}, by setting:
\begin{equation}\label{in:delta}
\delta \le \frac{\epsilon}{3}
\end{equation}
the following assertions hold:
\begin{enumerate}
\item[$(i)$] The primal average sequence $\hat{\mathbf{u}}^k$ given in \eqref{in:av_idg}
is $\epsilon$-optimal after   $\left \lfloor \frac{8 L_{\text{d}}
R_d^2}{\epsilon}\right\rfloor$ outer iterations.
\item[$(ii)$] Assuming that the primal   iterate $\mathbf{u}^k = \tilde{\mathbf{u}}(\mathbf{x}^k)$ is obtained
\an{by applying Nesterov's optimal method \cite{Nes:13} to} the
subproblem $\min\limits_{\mathbf{u} \in U} \;
\mathcal{L}(\mathbf{u},\mathbf{x}^k)$, the  primal average iterate
$\hat{\mathbf{u}}^k$ is $\epsilon$-optimal after:
\begin{equation*}
 \left\lfloor 8 \left( \frac{L_f}{\sigma_f}\right)^{1/2} \frac{L_{\text{d}}R_d^2}{\epsilon}
 \log \left( \frac{ L_f
 R_p^2}{\epsilon}\right)\right\rfloor
\end{equation*}
total number of projections on the primal simple set $U$ and
evaluations of $\nabla f$.
\end{enumerate}
\end{theorem}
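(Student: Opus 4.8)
\noindent\textit{Proof plan.} The plan is to follow the classical primal-recovery-by-averaging argument for inexact dual gradient methods, in the spirit of \cite{NecNed:14b,NedOzd:08}. Since $\theta_k=0$ forces $\mathbf{y}^{k+1}=\mathbf{x}^k$, algorithm \textbf{IDGM} is nothing but inexact projected gradient ascent on the $L_{\text{d}}$-smooth concave dual $d$, and the whole proof rests on one saddle-point-type inequality. Using the normalization $\mathbf{y}^0=\b0$ from \eqref{in:assump_simple}, I would establish that for every $\mathbf{x}\in\rset^p_+$ and every $k\ge0$
\begin{equation*}
f(\hat{\mathbf{u}}^k) + \langle \mathbf{x},\mathbf{g}(\hat{\mathbf{u}}^k)\rangle - f^* \;\le\; \frac{L_{\text{d}}\,\norm{\mathbf{x}}^2}{k+1} + 3\delta .
\end{equation*}
To obtain this, first use the convexity of $f$ and of each $g_i$ together with $\mathbf{x}\ge\b0$ to pass from the iterates to the primal average: $f(\hat{\mathbf{u}}^k)+\langle\mathbf{x},\mathbf{g}(\hat{\mathbf{u}}^k)\rangle \le \frac{1}{k+1}\sum_{j=0}^k\mathcal{L}(\mathbf{u}^j,\mathbf{x})$. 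Then, for a fixed index $j$, write $\mathcal{L}(\mathbf{u}^j,\mathbf{x})=\tilde{d}(\mathbf{y}^j)+\langle\tilde{\nabla}d(\mathbf{y}^j),\mathbf{x}-\mathbf{y}^j\rangle$ and split $\mathbf{x}-\mathbf{y}^j=(\mathbf{x}-\mathbf{x}^j)+(\mathbf{x}^j-\mathbf{y}^j)$; the variational inequality for the projection defining $\mathbf{x}^j=[\mathbf{y}^j+\tfrac{1}{2L_{\text{d}}}\tilde{\nabla}d(\mathbf{y}^j)]_+$ bounds $\langle\tilde{\nabla}d(\mathbf{y}^j),\mathbf{x}-\mathbf{x}^j\rangle$ by $L_{\text{d}}(\norm{\mathbf{x}-\mathbf{y}^j}^2-\norm{\mathbf{x}-\mathbf{x}^j}^2-\norm{\mathbf{x}^j-\mathbf{y}^j}^2)$, whereas the inexact descent inequality \eqref{in:inexact_oracle} at $(\mathbf{x}^j,\mathbf{y}^j)$ bounds the remaining term $\tilde{d}(\mathbf{y}^j)+\langle\tilde{\nabla}d(\mathbf{y}^j),\mathbf{x}^j-\mathbf{y}^j\rangle$ by $d(\mathbf{x}^j)+L_{\text{d}}\norm{\mathbf{x}^j-\mathbf{y}^j}^2+3\delta$. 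Adding these and invoking weak duality $d(\mathbf{x}^j)\le f^*$ gives $\mathcal{L}(\mathbf{u}^j,\mathbf{x})\le f^*+L_{\text{d}}(\norm{\mathbf{x}-\mathbf{y}^j}^2-\norm{\mathbf{x}-\mathbf{x}^j}^2)+3\delta$; summing over $j$ and telescoping (using $\mathbf{x}^j=\mathbf{y}^{j+1}$) yields the displayed estimate.

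Part $(i)$ then follows by three specializations of this inequality. Taking $\mathbf{x}=\b0$ gives the easy side of primal suboptimality, $f(\hat{\mathbf{u}}^k)-f^*\le 3\delta$. For feasibility, let $\mathbf{x}^*$ attain $R_d=\norm{\mathbf{y}^0-\mathbf{x}^*}=\norm{\mathbf{x}^*}$ and, if $[\mathbf{g}(\hat{\mathbf{u}}^k)]_+\neq\b0$, take $\mathbf{x}=\mathbf{x}^*+[\mathbf{g}(\hat{\mathbf{u}}^k)]_+/\norm{[\mathbf{g}(\hat{\mathbf{u}}^k)]_+}\in\rset^p_+$; using the identity $\langle[\mathbf{a}]_+,\mathbf{a}\rangle=\norm{[\mathbf{a}]_+}^2$, the bound $\mathcal{L}(\hat{\mathbf{u}}^k,\mathbf{x}^*)\ge d(\mathbf{x}^*)=f^*$, and $(\norm{\mathbf{x}^*}+1)^2\le 4R_d^2$ (valid since $R_d\ge1$ by \eqref{in:assump_simple}), one gets $\norm{[\mathbf{g}(\hat{\mathbf{u}}^k)]_+}\le \tfrac{4L_{\text{d}}R_d^2}{k+1}+3\delta$. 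For the harder side of suboptimality, take $\mathbf{x}=2\mathbf{x}^*$ and combine with $\langle\mathbf{x}^*,\mathbf{g}(\hat{\mathbf{u}}^k)\rangle\ge f^*-f(\hat{\mathbf{u}}^k)$ (another form of $\mathcal{L}(\hat{\mathbf{u}}^k,\mathbf{x}^*)\ge f^*$), which yields $f^*-f(\hat{\mathbf{u}}^k)\le \tfrac{4L_{\text{d}}R_d^2}{k+1}+3\delta$ without an extra factor $R_d$. Finally, substituting $\delta\le\epsilon/3$ and $k+1\ge 8L_{\text{d}}R_d^2/\epsilon$ makes all three right-hand sides $\mathcal{O}(\epsilon)$, i.e. $\hat{\mathbf{u}}^k$ is $\epsilon$-optimal after $\lfloor 8L_{\text{d}}R_d^2/\epsilon\rfloor$ outer iterations.

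For part $(ii)$ I would simply compose the outer count with the inner cost. At each outer iteration the point $\mathbf{u}^k=\tilde{\mathbf{u}}(\mathbf{y}^k)$ with inner accuracy $\delta=\epsilon/3$ is produced by Nesterov's optimal method \cite{Nes:13}, which by \eqref{in:inner_complexity} and the standing uniform bound $R_p(\mathbf{x})\le R_p$ (with warm-starting) costs $\mathcal{O}\!\big(\sqrt{L_f/\sigma_f}\,\log(L_fR_p^2/\epsilon)\big)$ projections on $U$ and evaluations of $\nabla f$; multiplying by $\lfloor 8L_{\text{d}}R_d^2/\epsilon\rfloor$ gives the stated total $\big\lfloor 8(L_f/\sigma_f)^{1/2}(L_{\text{d}}R_d^2/\epsilon)\log(L_fR_p^2/\epsilon)\big\rfloor$. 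I expect the delicate point to be entirely in the key inequality: tracking the $3\delta$ of \eqref{in:inexact_oracle} cleanly through the per-iteration bound and the telescoping so that the single requirement $\delta\le\epsilon/3$ suffices, and making sure the argument nowhere uses compactness of $U$ (which is precisely why one works with the primal average rather than the last-iterate relations of Theorem~\ref{in:th_last}); the remaining steps are routine substitutions.
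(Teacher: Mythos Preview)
Your proof is correct and takes a genuinely different route from the paper. The paper argues feasibility by writing $\mathbf{x}^{j+1}-\mathbf{x}^j$ directly from the projection update, summing, and using convexity of $\mathbf{g}$ to obtain $\norm{[\mathbf{g}(\hat{\mathbf{u}}^k)]_+}\le \tfrac{2L_{\text{d}}}{k+1}\norm{\mathbf{x}^{k+1}-\mathbf{x}^0}$; it then bounds $\norm{\mathbf{x}^{k+1}-\mathbf{x}^*}$ via a quasi-Fej\'er inequality (imported from \cite{NecNed:14b}), which is what produces the $\sqrt{L_{\text{d}}\delta/k}$ inexactness term in \eqref{in:feas_rate_final}. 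The lower suboptimality bound is subsequently obtained by multiplying the feasibility estimate by $R_d$, and the upper bound by a separate telescoping with $\mathbf{x}=\b0$. You instead establish a single Lagrangian-gap inequality $\mathcal{L}(\hat{\mathbf{u}}^k,\mathbf{x})-f^*\le \tfrac{L_{\text{d}}\norm{\mathbf{x}}^2}{k+1}+3\delta$ valid for all $\mathbf{x}\ge\b0$, by combining the projection three-point identity with the inexact descent lemma \eqref{in:inexact_oracle} and telescoping, and then specialize $\mathbf{x}$ three times. Your route is more self-contained (one master inequality instead of two telescoping arguments plus an external citation), the inexactness enters additively as $3\delta$ rather than through a square root, and your choice $\mathbf{x}=2\mathbf{x}^*$ yields the lower suboptimality bound without the extra $R_d$ factor that the paper incurs. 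Conversely, the paper's feasibility bound scales as $L_{\text{d}}R_d/k$ rather than your $L_{\text{d}}R_d^2/k$, so it is nominally sharper in $R_d$; but since both routes feed the same outer count $k\asymp L_{\text{d}}R_d^2/\epsilon$, the final complexity claims coincide up to the same ``minor variations in the constants'' the paper already concedes around \eqref{in:assump_simple}. Part~$(ii)$ is handled identically in both.
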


\begin{proof}
$(i)$ Using the definition of $\mathbf{x}^{k+1}$, we have:
 \begin{equation*}
 \mathbf{x}^{j+1} - \mathbf{x}^j = \left[ \mathbf{x}^j + \frac{1}{2L_{\text{d}}} \tilde{\nabla} \; d(\mathbf{x}^j) \right]_+ - \mathbf{x}^j  \quad \forall j \ge 0.
 \end{equation*}
Summing up the inequalities for $j = 0, \dots, k$ and dividing by
$k$, implies:
\begin{align*}
& \frac{2L_{\text{d}}}{k+1} (\mathbf{x}^{k+1} - \mathbf{x}^0)  =  \frac{2L_{\text{d}}}{k+1}\left(\sum\limits_{j=0}^k
\left[ \mathbf{x}^j + \frac{1}{2L_{\text{d}}} \tilde{\nabla} d(\mathbf{x}^j) \right]_+ - \mathbf{x}^j\right)  \\
& = \! \frac{2L_{\text{d}}}{k+1} \! \left[ \sum\limits_{j=0}^k \!
\left[ \mathbf{x}^j + \frac{1}{2L_{\text{d}}} \tilde{\nabla} d(\mathbf{x}^j) \right]_+
\!-\!  \left(\mathbf{x}^j + \frac{1}{2L_{\text{d}}} \tilde{\nabla} d(\mathbf{x}^j)
\right) \! \right] \!+\! \frac{1}{k+1} \! \sum\limits_{j=0}^k
\tilde{\nabla} d(\mathbf{x}^j).
\end{align*}
Using the fact that $\tilde{\nabla} d(\mathbf{x}^j) = \mathbf{g}(\mathbf{u}^j)$,  the convexity
of $\mathbf{g}$ and denoting $\mathbf{z}^j = \left[\mathbf{x}^j +  \frac{1}{2L_{\text{d}}}
\tilde{\nabla} d(\mathbf{x}^j) \right]_+ - \left(\mathbf{x}^j +
\frac{1}{2L_{\text{d}}} \tilde{\nabla} d(\mathbf{x}^j) \right) \in
\rset^p_+$, we get:
\begin{equation*}
 \mathbf{g}(\hat{\mathbf{u}}^k) + \frac{2L_{\text{d}}}{k+1}\sum\limits_{j=0}^k \mathbf{z}^j
 \le \frac{2L_{\text{d}}}{k+1}(\mathbf{x}^{k+1} - \mathbf{x}^0).
\end{equation*}
Note that if \an{a vector} pair $(\mathbf{a},\mathbf{b})$ satisfies $\mathbf{a} \le \mathbf{b}$, then
$[\mathbf{a}]_+ \le [\mathbf{b}]_+ $ and  $\norm{[\mathbf{a}]_+} \le \norm{[\mathbf{b}]_+}$.
Using \an{these relations and the fact that $\mathbf{z}^j \ge 0 $, we} obtain the following
convergence rate on the feasibility violation:
\begin{align}
\left \| \left[ \mathbf{g}(\hat{\mathbf{u}}^k)\right]_+\right\| & \le \left \|
\left[ \mathbf{g}(\hat{u}^k) + \frac{2L_{\text{d}}}{k+1} \sum\limits_{j=0}^k \mathbf{z}^j \right]_+
\right\| \le \frac{2L_{\text{d}}}{k+1} \left\| \left[\mathbf{x}^{k+1} -
\mathbf{x}^0 \right]_+ \right\| \nonumber \\
& \le \frac{2L_{\text{d}} \norm{\mathbf{x}^{k+1}-\mathbf{x}^0}}{k+1}. \label{in:aux_feas}
\end{align}
On the other hand, from \cite[Theorem 3.1]{NecNed:14b}, it can be
derived that:
\begin{equation}\label{in:res_descent_general}
\norm{\mathbf{x}^{j+1} \!-\! \mathbf{x}}^2 \!\le\! \norm{\mathbf{x}^j \!-\! \mathbf{x}}^2 \!-\!
\frac{1}{L_{\text{d}}} \langle \tilde{\nabla} d(\mathbf{x}^j), \mathbf{x} \!-\!
\mathbf{x}^j\rangle \!+\! \frac{1}{L_{\text{d}}}\!\left(d(\mathbf{x}^{j+1}) \!-\!
\tilde{d}(\mathbf{x}^j) \!+\! 3\delta\right),
\end{equation}
for all $\mathbf{x} \ge 0$ and $j \geq 0$.  Using
\eqref{in:inexact_oracle}, i.e. $d(\mathbf{x}) \leq
\tilde{d}(\mathbf{x}^j) + \langle \tilde{\nabla} d(\mathbf{x}^j),
\mathbf{x} - \mathbf{x}^j\rangle$, taking $\mathbf{x} =
\mathbf{x}^*$, using $d(\mathbf{x}^{j+1}) \leq d(\mathbf{x}^*)$ and
summing \an{over $j$ from $j=0$ to $k$}, we obtain:
\begin{equation}
\label{in:res_descent} \norm{\mathbf{x}^{k+1} - \mathbf{x}^*} \le \norm{\mathbf{x}^0  - \mathbf{x}^*} +
\sqrt{\frac{3 \delta(k+1)}{L_{\text{d}}}}.
\end{equation}

\noindent Combining the estimate for feasibility violation
\eqref{in:aux_feas} and \eqref{in:res_descent}, we finally have:
\begin{equation}
\label{in:feas_rate_final}
 \!\left\| \left[ \mathbf{g}(\hat{\mathbf{u}}^k)\right]_+ \right\| \!\le\!
 \frac{2 L_{\text{d}} (\norm{\mathbf{x}^0 \!-\! \mathbf{x}^*} \!+\! \norm{\mathbf{x}^{k+1} \!-\! \mathbf{x}^*})}{k+1}
 \!\le\! \frac{4L_{\text{d}} \norm{\mathbf{x}^0 \!-\! \mathbf{x}^*}}{k+1} \!+\! 2\sqrt{\frac{3L_{\text{d}} \delta}{k+1}}.
\end{equation}

\noindent In order to obtain a sublinear estimate on the primal
suboptimality, we \an{write}:
\begin{align}
 f^* & = \min\limits_{\mathbf{u} \in U} \an{\{f(\mathbf{u}) + \langle \mathbf{x}^*, \mathbf{g}(\mathbf{u}) \rangle\} }
 \le f(\hat{\mathbf{u}}^k) + \langle \mathbf{x}^*, \mathbf{g}(\hat{\mathbf{u}}^k)\rangle
 \le f(\hat{\mathbf{u}}^k) + \langle \mathbf{x}^*, \left[ \mathbf{g}(\hat{\mathbf{u}}^k)\right]_+\rangle \nonumber \\
 & \le f(\hat{\mathbf{u}}^k) + \norm{\mathbf{x}^*} \left\| \left[\mathbf{g}(\hat{\mathbf{u}}^k) \right]_+ \right\|
 \le f(\hat{\mathbf{u}}^k) + (R_d + \norm{\mathbf{x}^0}) \left\| \left[ \mathbf{g}(\hat{\mathbf{u}}^k)\right]_+ \right\|.\label{in:subopt_left}
\end{align}
On the other hand, taking $\mathbf{x} = \b0$ in \eqref{in:res_descent_general} and
using the definition of $\tilde{d}(\mathbf{x}^j)$, we obtain:
\begin{align*}
\norm{\mathbf{x}^{j+1}}^2  &\le \norm{\mathbf{x}^j}^2 \!+\! \frac{1}{L_{\text{d}}}
\langle \tilde{\nabla} d(\mathbf{x}^j), \mathbf{x}^j \rangle \!+\!
\frac{1}{L_{\text{d}}}
\! \left( \!d(\mathbf{x}^{j+1}) \!-\! f(\mathbf{u}^j) - \langle \mathbf{x}^j, \tilde{\nabla} d(\mathbf{x}^j)\rangle  \!+\! 3\delta \right)\\
& \le \norm{\mathbf{x}^j}^2 \!+\! \frac{1}{L_d} \left( f^* \!-\! f(\mathbf{u}^j) \!+\!
3\delta\right).
\end{align*}
Using an inductive argument, the convexity of $f$ and the definition
of $\hat{\mathbf{u}}^k$, we get:
\begin{equation}\label{in:subopt_right}
 f(\hat{\mathbf{u}}^k) - f^* \le \frac{L_{\text{d}} \norm{\mathbf{x}^0}^2}{k+1} + 3\delta.
\end{equation}
Using the assumption $\mathbf{x}^0 = \b0$, from \eqref{in:feas_rate_final},
\eqref{in:subopt_left} and \eqref{in:subopt_right}, we get:
\begin{equation*}
- \frac{4L_{\text{d}} R_d^2}{k+1} - 2R_d\sqrt{\frac{3L_{\text{d}}
\delta}{k+1}} \le f(\an{\hat{\mathbf{u}}^k}) - f^* \le 3 \delta.
\end{equation*}
From assumptions \an{on the constants $R_d$, $L_{\text{d}}$ and $\delta$
(see \eqref{in:assump_simple} and \eqref{in:delta}), our first result follows.}

\noindent $(ii)$ Taking into account the relation \eqref{in:delta} on
$\delta$, the inner number of  projections on the simple set $U$ at
each outer iteration is given by:
\[ \left\lfloor \left( \frac{L_f}{\sigma_f}\right)^{1/2} \log
\left( \frac{ L_f
 R_p^2}{\epsilon}\right)\right\rfloor. \]
Multiplying with the outer complexity obtained in $(i)$, we get the
second result. \qed
\end{proof}

\noindent Further, we study the computational complexity of the
second particular algorithm \textbf{IDFGM}, i.e. the scheme
\textbf{IDFOM} with $\theta_k = \frac{2}{k+3}$. Note that in the
framework \textbf{IDFOM} both sequences \an{$\{\mathbf{x}^k\}_{k \ge 0}$ and
$\{\mathbf{y}^k\}_{k \ge 0}$} are dual feasible, i.e. are in $\rset^p_+$.  Based
on \cite[Theorem 2]{Nes:05} (see also \cite{DevGli:11,NecSuy:08}), when
$\theta_k =  \frac{2}{k+3}$, we have the following inequality which
will help us to establish the convergence properties of the
particular algorithm \textbf{IDFGM}:
\begin{align}
\label{in:nesterov_relation}
& \frac{(k+1)(k+2)}{4}   d(\mathbf{x}^k)  + \frac{(k+1)(k+2)(k+3)}{4}\delta   \\
& \qquad \ge \max_{\an{\mathbf{x} \ge \b0} } \left( -L_{\text{d}}\norm{\mathbf{x}-\mathbf{y}^0}^2  +
\sum\limits_{j=0}^k \frac{j+1}{2}\left[ \tilde{d}(\mathbf{y}^j) + \langle
\tilde{\nabla} d(\mathbf{y}^j), \mathbf{x} - \mathbf{y}^j\rangle \right] \right).  \nonumber
\end{align}

\noindent We now derive complexity estimates for primal
infeasibility and suboptimality of the average primal sequence
\an{$\{\hat{\mathbf{u}}^k\}_{k \ge 0}$} as defined in \eqref{in:av_idg} for algorithm
\textbf{IDFGM}.

\begin{theorem}\label{in:th_idfg_av}
Let $\epsilon > 0$ and $\mathbf{u}^k = \tilde{\mathbf{u}}(\mathbf{y}^k)$ be the primal sequence
generated by  algorithm \textbf{IDFGM} (i.e.   $\theta_k =
\frac{2}{k+3}$  for all $k \geq 0$). Under Assumption
\ref{in:assump_all}, by setting:
\begin{equation}\label{in:delta_bound_idfg_av}
\delta \le \frac{\epsilon^{3/2}}{8 L_{\text{d}}^{1/2} R_d},
\end{equation}
the following assertions hold:

\noindent $(i)$ The primal average iterate  $\hat{\mathbf{u}}^k$ given in
\eqref{in:av_idg} is $\epsilon$-optimal after $\left \lfloor
\left(\frac{32L_{\text{d}} R_d^2}{\epsilon}
\right)^{1/2}\right\rfloor$ outer iterations.

\noindent $(ii)$ Assuming that the primal iterate $\mathbf{u}^k =
\tilde{\mathbf{u}}(\mathbf{y}^k)$ is obtained \an{by applying Nesterov's optimal method
\cite{Nes:13} to the subproblem $\min\limits_{\mathbf{u} \in U} \;
\mathcal{L}(\mathbf{u},\mathbf{y}^k)$, the} average primal iterate $\hat{\mathbf{u}}^k$ is
$\epsilon$-optimal after:
 \begin{equation*}
 \left \lfloor \sqrt{\frac{L_f}{\sigma_f}} \left( \frac{32 L_{\text{d}} R_d^2}{\epsilon}\right)^{1/2}
 \log \left( \frac{4 L_{\text{d}}^{1/2}L_f R_p^2 R_d}{\epsilon^{3/2}} \right) \right \rfloor
 \end{equation*}
total number of projections on the primal simple set $U$ and
evaluations of $\nabla f$.
\end{theorem}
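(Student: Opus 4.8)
The plan is to mirror the structure of the proof of Theorem~\ref{in:th_idg_av}, but using the fast-gradient machinery, i.e.\ relation \eqref{in:nesterov_relation} in place of the plain descent recursion \eqref{in:res_descent_general}. First I would establish a bound on the primal feasibility violation $\|[\mathbf{g}(\hat{\mathbf{u}}^k)]_+\|$ in terms of $k$ and $\delta$. The key observation is that the term $\sum_{j=0}^k \tfrac{j+1}{2}\tilde\nabla d(\mathbf{y}^j)$ appearing inside the projection in Step~3 of \textbf{IDFOM}, once divided by the natural weight $(k+1)(k+2)/4$, produces exactly the weighted average $\mathbf{g}(\hat{\mathbf{u}}^k)$ by convexity of $\mathbf{g}$ (recall $\tilde\nabla d(\mathbf{y}^j)=\mathbf{g}(\mathbf{u}^j)$ and the weights $2(j+1)/((k+1)(k+2))$ are those in the \textbf{IDFGM} branch of \eqref{in:av_idg}). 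Writing $\mathbf{x}^k = [\mathbf{y}^0 + \tfrac{1}{2L_{\text{d}}}\sum_{j=0}^k \tfrac{j+1}{2}\tilde\nabla d(\mathbf{y}^j)]_+$ (the value of $\mathbf{y}^{k+1}$ at $\theta_k=1$, or one can introduce an auxiliary projected point exactly as in the \textbf{IDGM} redefinition), the same $[\mathbf{a}]_+\le[\mathbf{b}]_+$ monotonicity argument used in \eqref{in:aux_feas} gives
\[
\left\|\left[\mathbf{g}(\hat{\mathbf{u}}^k)\right]_+\right\| \;\le\; \frac{8 L_{\text{d}}}{(k+1)(k+2)}\,\|\mathbf{x}^k - \mathbf{y}^0\|,
\]
and then one needs a bound on $\|\mathbf{x}^k-\mathbf{y}^0\|$. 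Under $\mathbf{y}^0=0$ this is just $\|\mathbf{x}^k\|$, and I would obtain it by evaluating \eqref{in:nesterov_relation} at $\mathbf{x}=0$ together with the dual convergence rate of Theorem~\ref{in:th_dual_convrate_idfom} with $p(\theta)=2$, exactly paralleling how \eqref{in:res_descent} was derived from \eqref{in:res_descent_general}; this should yield $\|\mathbf{x}^k\| \lesssim R_d + \sqrt{(k+1)(k+2)(k+3)\delta/L_{\text{d}}}$ up to constants, whence $\|[\mathbf{g}(\hat{\mathbf{u}}^k)]_+\| \lesssim L_{\text{d}}R_d/k^2 + \sqrt{L_{\text{d}}\delta(k+3)}/\,\text{const}$.

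For primal suboptimality I would use the two-sided sandwich. The lower bound $f(\hat{\mathbf{u}}^k)-f^*\ge -\|\mathbf{x}^*\|\,\|[\mathbf{g}(\hat{\mathbf{u}}^k)]_+\| \ge -R_d\|[\mathbf{g}(\hat{\mathbf{u}}^k)]_+\|$ follows from Jensen and $\langle\mathbf{x}^*,\mathbf{g}(\mathbf{u}^*)\rangle=0$ exactly as in \eqref{in:subopt_left}, so it is controlled by the feasibility estimate just obtained. For the upper bound $f(\hat{\mathbf{u}}^k)-f^*$, I would again appeal to \eqref{in:nesterov_relation}, but now I cannot simply evaluate at $\mathbf{x}=0$ because the weighted sum of linearized Lagrangian models $\tilde d(\mathbf{y}^j)+\langle\tilde\nabla d(\mathbf{y}^j),\mathbf{x}-\mathbf{y}^j\rangle = f(\mathbf{u}^j)+\langle\mathbf{x},\mathbf{g}(\mathbf{u}^j)\rangle$ evaluated at $\mathbf{x}=0$ gives $\sum \tfrac{j+1}{2} f(\mathbf{u}^j) \ge \tfrac{(k+1)(k+2)}{4}(f(\hat{\mathbf{u}}^k) - O(\delta\text{-type correction}))$ by convexity of $f$; combined with $d(\mathbf{x}^k)\le d(\mathbf{x}^*)=f^*$ and dividing through by $(k+1)(k+2)/4$, this yields $f(\hat{\mathbf{u}}^k)-f^* \le O(L_{\text{d}}R_d^2/k^2) + O((k+3)\delta)$ — the $\delta$ term here being the more delicate one, coming from the cubic-in-$k$ slack $\tfrac{(k+1)(k+2)(k+3)}{4}\delta$ after normalization.

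Combining the feasibility and suboptimality estimates, I evaluate at the candidate iteration count $k = \lfloor (32 L_{\text{d}}R_d^2/\epsilon)^{1/2}\rfloor$: the $O(L_{\text{d}}R_d^2/k^2)$ terms become $O(\epsilon)$, and the $O(\sqrt{L_{\text{d}}\delta k})$ and $O(\delta k)$ terms are forced below $\epsilon$ precisely by the choice $\delta \le \epsilon^{3/2}/(8 L_{\text{d}}^{1/2}R_d)$ in \eqref{in:delta_bound_idfg_av} (note $\delta k \sim \epsilon^{3/2}/(L_{\text{d}}^{1/2}R_d)\cdot(L_{\text{d}}R_d^2/\epsilon)^{1/2} = \epsilon R_d$, which is why the conditions \eqref{in:assump_simple} on $R_d$ are invoked to absorb constants). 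This gives part~$(i)$. For part~$(ii)$, at each outer iteration Nesterov's optimal method \cite{Nes:13} applied to the $\sigma_f$-strongly convex, $L_f$-smooth inner subproblem needs $N_\delta = \lfloor\sqrt{L_f/\sigma_f}\,\log(L_fR_p^2/(2\delta))\rfloor$ projections by \eqref{in:inner_complexity}; substituting $\delta$ from \eqref{in:delta_bound_idfg_av} turns the logarithm into $\log(4 L_{\text{d}}^{1/2}L_f R_p^2 R_d/\epsilon^{3/2})$, and multiplying by the outer count from $(i)$ gives the stated total. The main obstacle I anticipate is bookkeeping the exact constants through the two applications of \eqref{in:nesterov_relation} — in particular correctly tracking the $\tfrac{(k+1)(k+2)(k+3)}{4}\delta$ slack and the various $\|\mathbf{x}^0\|$ versus $R_d$ substitutions — so that the thresholds land exactly on $32 L_{\text{d}}R_d^2/\epsilon$ and $\epsilon^{3/2}/(8L_{\text{d}}^{1/2}R_d)$; the qualitative argument itself is a direct transcription of the \textbf{IDGM} case with the fast-gradient rate $p(\theta)=2$ substituted throughout.
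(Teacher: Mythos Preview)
Your overall architecture---lower suboptimality via the feasibility bound and $\langle \mathbf{x}^*,\cdot\rangle$, upper suboptimality by evaluating \eqref{in:nesterov_relation} at $\mathbf{x}=0$, and part~$(ii)$ by substituting \eqref{in:delta_bound_idfg_av} into \eqref{in:inner_complexity}---matches the paper. The difference, and the place where your proposal has a real gap, is the feasibility estimate.

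You propose to bound $\|[\mathbf{g}(\hat{\mathbf{u}}^k)]_+\|$ by first controlling the norm of the auxiliary projected point $\mathbf{z}^k := [\mathbf{y}^0 + \tfrac{1}{2L_{\text{d}}}\sum_{j=0}^k \tfrac{j+1}{2}\tilde\nabla d(\mathbf{y}^j)]_+$, and you claim this norm can be obtained ``by evaluating \eqref{in:nesterov_relation} at $\mathbf{x}=0$ together with the dual convergence rate.'' But neither ingredient contains $\|\mathbf{z}^k\|$: plugging $\mathbf{x}=0$ into \eqref{in:nesterov_relation} yields only $\sum \tfrac{j+1}{2}f(\mathbf{u}^j)\le \tfrac{(k+1)(k+2)}{4}d(\mathbf{x}^k)+\text{(slack)}$, and the dual rate of Theorem~\ref{in:th_dual_convrate_idfom} controls $f^*-d(\mathbf{x}^k)$. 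The parallel with \eqref{in:res_descent} breaks down because there the recursion \eqref{in:res_descent_general} is itself a statement about $\|\mathbf{x}^{j+1}-\mathbf{x}\|$, whereas \eqref{in:nesterov_relation} is not. (Your route \emph{can} be rescued: evaluate \eqref{in:nesterov_relation} at its maximizer $\mathbf{x}=\mathbf{z}^k$ to extract $L_{\text{d}}\|\mathbf{z}^k\|^2$ on the right, then combine with $d(\mathbf{x}^k)-f(\hat{\mathbf{u}}^k)\le R_d\|[\mathbf{g}(\hat{\mathbf{u}}^k)]_+\|\le \tfrac{8L_{\text{d}}R_d}{(k+1)(k+2)}\|\mathbf{z}^k\|$ to get a quadratic in $\|\mathbf{z}^k\|$---but this is not what you wrote.)

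The paper bypasses the auxiliary iterate entirely. After dividing \eqref{in:nesterov_relation} by $(k+1)(k+2)/4$ and applying convexity of $f,\mathbf{g}$ exactly as you describe, it obtains
\[
\max_{\mathbf{x}\ge 0}\Bigl(-\tfrac{4L_{\text{d}}}{(k+1)^2}\|\mathbf{x}-\mathbf{y}^0\|^2 + \langle \mathbf{x},\mathbf{g}(\hat{\mathbf{u}}^k)\rangle\Bigr)\;\le\; d(\mathbf{x}^k)-f(\hat{\mathbf{u}}^k)+(k+3)\delta,
\]
bounds the right side by $\langle \mathbf{x}^*,[\mathbf{g}(\hat{\mathbf{u}}^k)]_+\rangle + (k+3)\delta$ via strong duality, and then evaluates the left side at the specific test point $\mathbf{x}=\tfrac{(k+1)^2}{8L_{\text{d}}}[\mathbf{g}(\hat{\mathbf{u}}^k)]_+$. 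This choice makes $\gamma:=\|[\mathbf{g}(\hat{\mathbf{u}}^k)]_+\|$ appear quadratically on the left and linearly on the right, giving directly a quadratic inequality in $\gamma$ whose largest root is the bound \eqref{in:infes_av}. This is cleaner than going through $\|\mathbf{z}^k\|$ and is the step you are missing. Two minor side remarks: in your upper suboptimality bound there is no $O(L_{\text{d}}R_d^2/k^2)$ term (with $\mathbf{y}^0=0$ the quadratic penalty at $\mathbf{x}=0$ vanishes, and the paper gets simply $f(\hat{\mathbf{u}}^k)-f^*\le (k+3)\delta$), and no ``$\delta$-type correction'' is needed in the convexity step for $f$.
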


\begin{proof}
\noindent $(i)$ For  primal feasibility estimate, we use
\eqref{in:nesterov_relation} and the convexity of $f$ and $g$:
\begin{align}
\max\limits_{\an{\mathbf{x} \ge \b0}} \an{\left(-\frac{4L_{\text{d}}}{(k+1)^2}\norm{\mathbf{x}-\mathbf{y}^0}^2 +
\langle \mathbf{x}, \mathbf{g}(\hat{\mathbf{u}}^k)\rangle\right)}
\le d(\mathbf{x}^k) - f(\hat{\mathbf{u}}^k) +
(k+3)\delta. \label{in:dual_nes_relation}
\end{align}
For the right hand side term, using \an{the strong duality and $\mathbf{x}^* \ge \b0$},  we have:
\begin{align}
 d(\mathbf{x}^k) - f(\hat{\mathbf{u}}^k)
 &\le d(\mathbf{x}^*) - f(\hat{\mathbf{u}}^k)
 =\min\limits_{\mathbf{u} \in U} \an{ \{f(u) + \langle \mathbf{x}^*, \mathbf{g}(\mathbf{u})\rangle\} }
    - f(\hat{\mathbf{u}}^k) \nonumber\\
& \le \langle \mathbf{x}^*, \mathbf{g}(\hat{\mathbf{u}}^k)\rangle \le \left \langle \mathbf{x}^*,
[\mathbf{g}(\hat{\mathbf{u}}^k)]_+ \right \rangle. \label{in:dual_nes_relation2}
\end{align}
By evaluating the left hand side term in \eqref{in:dual_nes_relation}
at $\mathbf{x} = \frac{(k+1)^2}{8 L_{\text{d}}} [\mathbf{g}(\hat{\mathbf{u}}^k)]_+$ and
observing that $\langle [\mathbf{g}(\hat{\mathbf{u}}^k)]_+, \mathbf{g}(\hat{\mathbf{u}}^k) -
[\mathbf{g}(\hat{\mathbf{u}}^k)]_+\rangle = 0$ we obtain:
\begin{align}
&\max\limits_{\an{\mathbf{x} \ge \b0} } \an{\left( - \frac{4L_{\text{d}}}{(k+1)^2}\norm{\mathbf{x}-\mathbf{y}^0}^2 +
\langle \mathbf{x}, \mathbf{g}(\hat{\mathbf{u}}^k)\rangle\right)} \label{in:dual_nes_relation3}\\
& \ge \frac{(k+1)^2}{16 L_{\text{d}}}\norm{\left[\mathbf{g}(\hat{\mathbf{u}}^k)
\right]_+}^2 \!-\! \frac{4L_{\text{d}}\norm{\mathbf{y}^0}^2}{(k+1)^2} \!+\!
\langle \mathbf{y}^0, \left[ \mathbf{g}(\hat{\mathbf{u}}^k)\right]_+\rangle. \nonumber
\end{align}
Combining \eqref{in:dual_nes_relation2} and \eqref{in:dual_nes_relation3}
with \eqref{in:dual_nes_relation}, using the \an{Cauchy-Schwarz} inequality
and notation $\gamma = \norm{[\mathbf{g}(\hat{\mathbf{u}}^k)]_+}$ we obtain:
\begin{equation*}
 \frac{(k+1)^2}{16 L_{\text{d}}}\gamma^2 - (k+3)\delta - \norm{\mathbf{x}^* - \mathbf{y}^0}\gamma -
 \frac{4L_{\text{d}}\norm{\mathbf{y}^0}^2}{(k+1)^2} \le 0.
\end{equation*}
Thus, $\gamma$ must be less than the largest root of the
second-order equation, from which, together with the definition of
$R_{\text{d}}$ we get:
 \begin{equation}\label{in:infes_av}
 \norm{ \left [ \mathbf{g}(\hat{\mathbf{u}}^k) \right]_+} \le
 \frac{16L_{\text{d}} R_{\text{d}} }{(k+1)^2} +
  4\sqrt{\frac{3 L_{\text{d}} \delta}{k+1}}.
 \end{equation}

\noindent For the left hand side on primal suboptimality, using \an{$\mathbf{x}^*\ge \b0$}, we have:
\begin{align*}
f^* &= \min\limits_{\mathbf{u} \in U} \an{\{ f(\mathbf{u}) + \langle \mathbf{x}^*, \mathbf{g}(\mathbf{u})\rangle\} }
\le f(\hat{\mathbf{u}}^k) + \langle \mathbf{x}^*, \mathbf{g}(\hat{\mathbf{u}}^k)\rangle \\
 & \le f(\hat{\mathbf{u}}^k) + \langle \mathbf{x}^*, [ \mathbf{g}(\hat{\mathbf{u}}^k)]_+\rangle \le
 f(\hat{\mathbf{u}}^k) + R_{\text{d}} \norm{[\mathbf{g}(\hat{\mathbf{u}}^k)]_+}.
\end{align*}
Using  \eqref{in:infes_av}, we derive an estimate on the left hand side
primal suboptimality:
\begin{equation}\label{in:subopt_idfg_left}
 f(\hat{\mathbf{u}}^k) - f^*  \le \frac{16L_{\text{d}} R_d^2}{(k+1)^2} + 4R_d
\sqrt{\frac{3 L_{\text{d}} \delta}{k+1}}.
\end{equation}
On the other hand, taking $\mathbf{x}=\b0$ in \eqref{in:dual_nes_relation} and
\an{recalling} that $\mathbf{y}^0=\b0$, we get:
\begin{align}\label{in:subopt_idfg_right}
 f(\hat{\mathbf{u}}^k) - d(\mathbf{x}^k)
 &\le - \max\limits_{\an{\mathbf{x} \ge \b0}} \an{\left( - \frac{4L_{\text{d}}}{(k+1)^2}\norm{\mathbf{x}-\mathbf{y}^0}^2
+ \langle \mathbf{x}, \mathbf{g}(\hat{\mathbf{u}}^k)\rangle\right)} + (k+3)\delta \nonumber\\
& \le (k+3)\delta.
\end{align}
\an{Moreover, taking into account that $d(\mathbf{x}^k) \le f^*$, from
\eqref{in:subopt_idfg_left} and \eqref{in:subopt_idfg_right} we obtain}:
\begin{equation} \label{in:ps_dfg}
- \frac{16L_{\text{d}} R_d^2}{(k+1)^2} - 4R_d
\sqrt{\frac{3L_{\text{d}} \delta}{k+1}} \le f(\hat{\mathbf{u}}^k) - f^* \le
(k+3)\delta.
\end{equation}
From the convergence rates \eqref{in:infes_av} and \eqref{in:ps_dfg} we
obtain our first result.

\noindent $(ii)$ \an{Substitution of} the bound \eqref{in:delta_bound_idfg_av}
into the inner complexity estimate \eqref{in:inner_complexity} leads
to:
\begin{equation*}
 \left \lfloor \sqrt{\frac{L_f}{\sigma_f}}
 \log \left( \frac{ 4 L_{\text{d}} L_f R_d R_p^2}{\epsilon} \right) \right\rfloor
\end{equation*}
projections on $U$ and evaluations of $\nabla f$ for each outer
iteration. Multiplying with the outer complexity \an{estimate obtained in part $(i)$,} we get
our second result. \qed
\end{proof}

\noindent Thus,  we obtained computational complexity estimates for
primal infeasibility and  suboptimality  for the average of primal
iterates $\hat{\mathbf{u}}^k$  of order
$\mathcal{O}(\frac{1}{\epsilon} \log \frac{1}{\epsilon})$ for the
scheme \textbf{IDGM} and of order
$\mathcal{O}(\frac{1}{\sqrt{\epsilon}} \log \frac{1}{\epsilon})$ for
the scheme \textbf{IDFGM}. Moreover, the inner subproblem needs to
be solved with the inner accuracy $\delta$ of order
$\mathcal{O}(\epsilon)$ for  \textbf{IDGM} and of order
$\mathcal{O}(\epsilon \sqrt{\epsilon})$ for  \textbf{IDFGM} \an{so
that to have the primal average sequence  $\hat{\mathbf{u}}^k$ as}
an $\epsilon$-optimal primal solution. Further, the iteration
complexity estimates  in the last primal  iterate $\mathbf{v}^k$ are
inferior to those estimates corresponding to an average of primal
iterates $\hat{\mathbf{u}}^k$. However, in practical applications we
have observed that algorithm \textbf{IDFOM} converges faster in the
last primal iterate  than in the primal average sequence. Note that
this does not mean that our analysis is weak, since we can also
construct problems which show the behavior predicted by the theory.


\section{DuQuad toolbox}
\noindent In this section, we present the open-source  solver DuQuad
\cite{KwaNec:14} based on C-language implementations of the
framework \textbf{IDFOM} for solving quadratic programs (QP) that
appear in many applications. For example  linear MPC problems are
usually formulated as QPs that need to be solved at each time
instant for a given state. Thus, in this toolbox we considered
convex quadratic programs of the form:
\begin{align}\label{in:original_primal}
\min\limits_{\mathbf{u} \in U}& \; f(\mathbf{u}) \quad \left(: = \frac{1}{2}\mathbf{u}^T\mathbf{Q} \mathbf{u} +
\mathbf{q}^T \mathbf{u}\right)  \qquad  \text{s.t.}:  \quad \mathbf{Gu} + \mathbf{g} \le 0,
\end{align}
where $ \mathbf{Q} \succ 0$, $ \mathbf{G}  \in \rset^{p \times n}$ and $U \subseteq
\rset^n$ is a simple compact  convex set, i.e. a box $ U= [\mathbf{lb}
\; \mathbf{ub}]$. Note that our formulation allows to incorporate in
the QP either linear inequality constraints (arising e.g. in sparse
formulation of predictive control and network utility maximization)
or linear equality constraints (arising  e.g. in condensed
formulation of predictive control and DC optimal power flow). In
fact the user can define linear constraints of the form:
$\bar{\mathbf{lb}} \leq \bar{\mathbf{G}}\mathbf{u}+ \bar{\mathbf{g}} \leq \bar{\mathbf{ub}}$ and
depending on the values for $\bar{\mathbf{lb}}$ and $\bar{\mathbf{ub}}$
we have linear inequalities or equalities. Note that the objective
function of \eqref{in:original_primal} has Lipschitz gradient with
constant $L_{\text{f}} = \lambda_{\max}(\mathbf{Q})$ and  its dual has also
Lipschitz gradient with constant $L_{\text{d}} =
\frac{\norm{\mathbf{G}}^2}{\lambda_{\min}(\mathbf{Q})}$. Based on the scheme
\textbf{IDFOM}, the main iteration in DuQuad consists of two steps:

\vspace{0.1cm}

\noindent \textbf{Step 1}: for a given inner accuracy $\delta>0$ and
a multiplier $\mathbf{x} \in \rset^p_+$, we solve approximately  the inner
problem with accuracy $\delta$ to obtain an approximate solution
$\tilde{\mathbf{u}}(\mathbf{x})$ instead of the exact solution $\mathbf{u}(\mathbf{x})$, i.e.:
$\mathcal{L}(\tilde{\mathbf{u}}(\mathbf{x}),\mathbf{x}) - d(\mathbf{x}) \leq \delta$. In DuQuad, we
obtain an approximate solution $\tilde{\mathbf{u}}(\mathbf{x})$ using Nesterov's
optimal method \cite{Nes:13} and warm-start.

\vspace{0.1cm}

\noindent \textbf{Step 2}: Once a $\delta$-solution  $\tilde{\mathbf{u}}(\mathbf{x})$
for inner subproblem was found, we update at the outer stage the
Lagrange multipliers using the scheme {\bf IDFOM}, i.e. for updating
the Lagrange multipliers we use instead of the true value of the
dual gradient $\nabla d(\mathbf{x}) = \mathbf{G} \mathbf{u}(\mathbf{x}) + \mathbf{g}$, an approximate value
$\tilde{\nabla} d(\mathbf{x}) = \mathbf{G} \tilde{\mathbf{u}}(\mathbf{x}) + \mathbf{g}$.

\begin{figure}[h!]
\begin{center}
\includegraphics[width=0.8\textwidth, height=7cm]{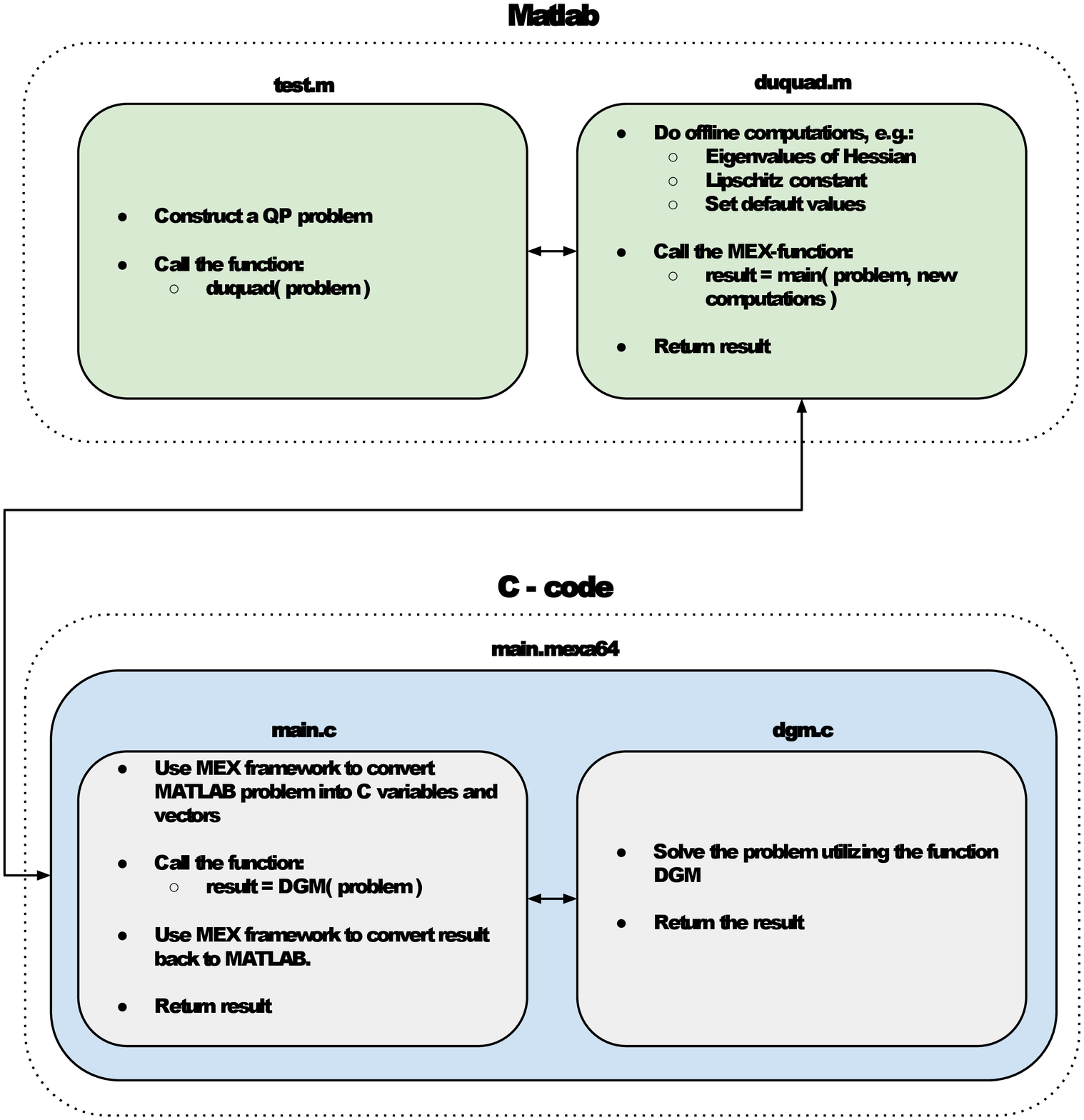}
\caption{DuQuad workflow.}
\label{in:fig:duquad_workflow}
\end{center}
\end{figure}

\vspace{0.1cm}
\noindent An overview of the workflow in DuQuad \cite{KwaNec:14} is illustrated in
Fig. \ref{in:fig:duquad_workflow}. A QP problem is constructed using a
Matlab script called \textit{test.m}. Then, the function
\textit{duquad.m} is called with the problem data as input and \an{it} is
regarded as a preprocessing stage for the online optimization. The
binary MEX file is called, with the original problem data and the
extra \an{information} as \an{an} input. The \textit{main.c} file of the C-code includes
the MEX framework and is able to convert the MATLAB data into C
format. Furthermore, the converted data gets bundled into a C
``struct'' and passed as \an{an} input to the algorithm that solves the
problem using the \an{two steps as described above}.

\noindent  In DuQuad \an{a} user can choose either algorithm
\textbf{IDFGM} or algorithm \textbf{IDGM} for solving the dual
problem.  \an{Moreover, the user} can also choose the inner accuracy $\delta$ for
solving the inner problem. \an{In} the toolbox the default values for
$\delta$ are taken as in Theorems \ref{in:th_last}, \ref{in:th_idg_av} and
\ref{in:th_idfg_av}. From these theorems we conclude that the inner QP
has to be solved with higher accuracy in dual fast gradient
algorithm \textbf{IDFGM} than in dual  gradient algorithm
\textbf{IDGM}. This shows that dual gradient algorithm  \textbf{IDGM}
is  robust to inexact information, while dual fast gradient
algorithm \textbf{IDFGM} is sensitive  to inexact computations, as we can also see from
\an{plots in} Fig. \ref{in:dfgm_sensitivity}.
\begin{figure}[h!]
\centering
\includegraphics[width=0.5\textwidth,height=5cm]{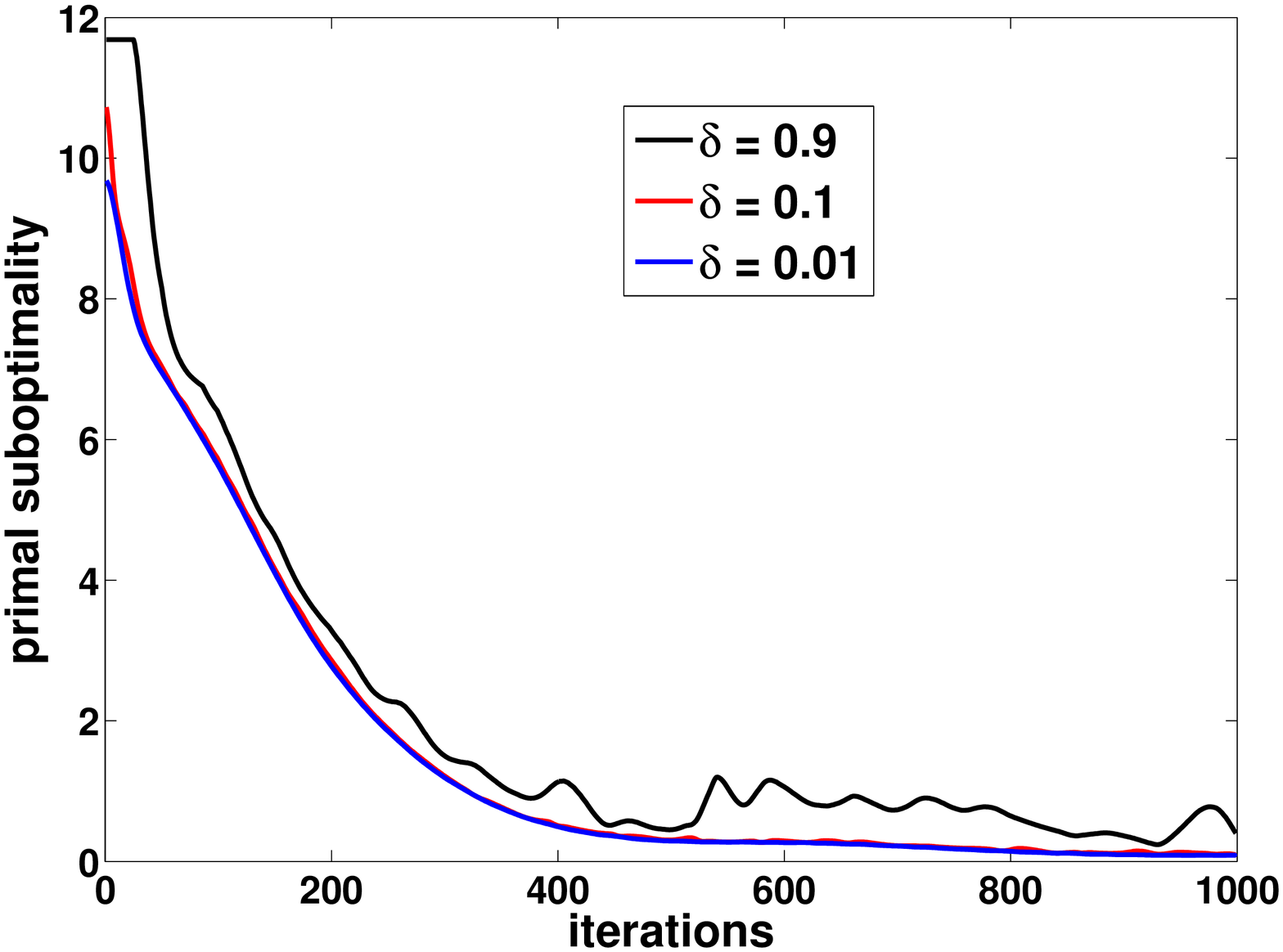}
\hspace{-10pt}
\includegraphics[width=0.51\textwidth,height=5cm]{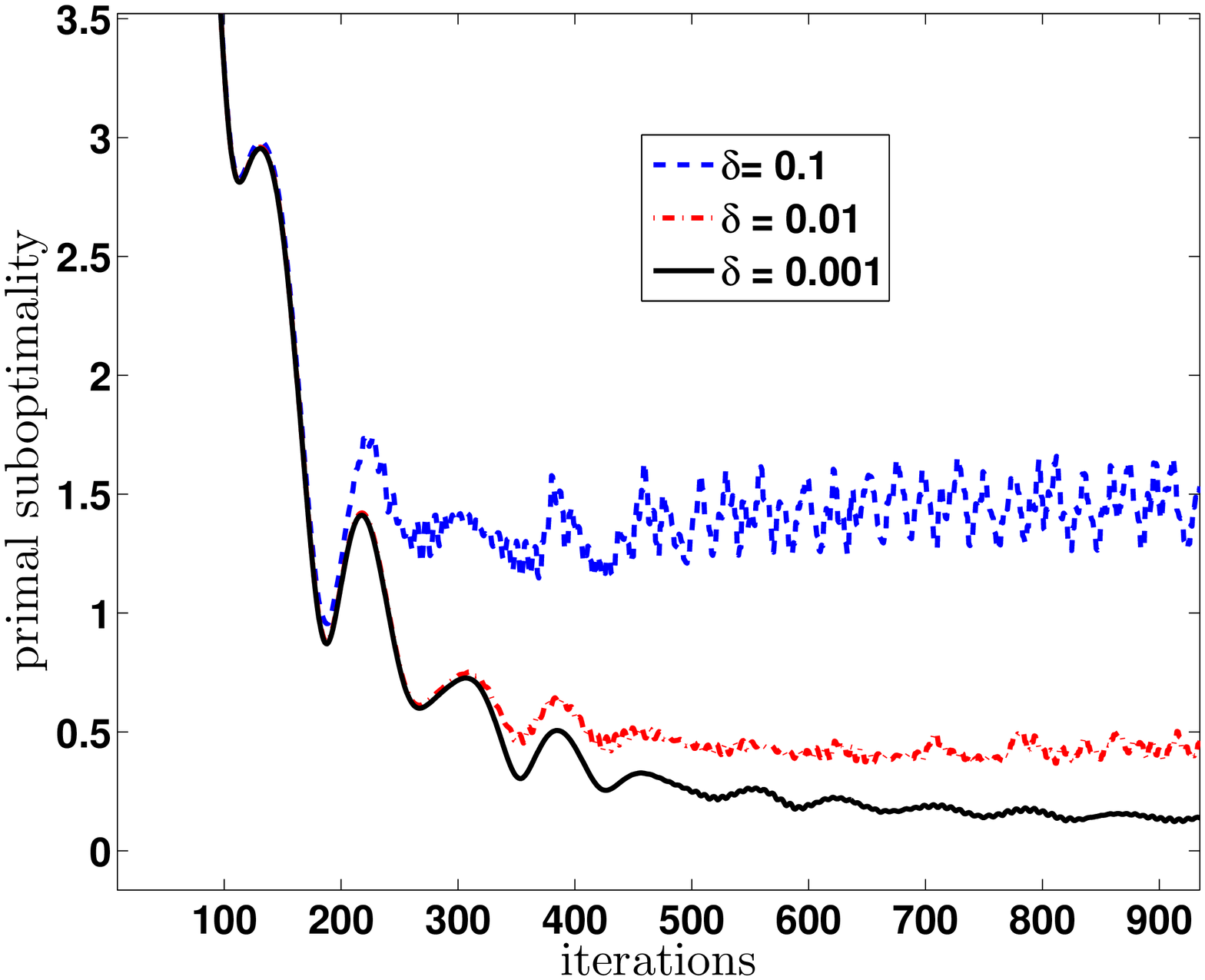}
\caption{Sensitivity  of \textbf{IDGM} (left) and \textbf{IDFGM} (right) in the average of iterates  in terms of primal suboptimality
w.r.t. different values of the inner accuracy $\delta$ for a QP ($n=50$ and $p=75$)
with desired accuracy  $\epsilon=0.01$.} \label{in:dfgm_sensitivity}
\end{figure}

\noindent Let us analyze now the  computational cost per inner and
outer iteration for algorithm {\bf IDFOM} for solving approximately
the original QP problem \eqref{in:original_primal}:

\vspace{2pt}

\noindent \textbf{Inner iteration}: When solving the inner problem
with Nesterov's optimal method \cite{Nes:13}, the main computational
effort is done in computing the gradient of the Lagrangian $ \nabla
\mathcal{L}(\mathbf{u},\mathbf{x}) = \mathbf{Q}\mathbf{u} + \mathbf{q} + \mathbf{G}^T \mathbf{x}$.
In DuQuad these matrix-vector
operations are implemented efficiently  in C (\an{the} matrices that do not
change along iterations are computed once and only $\mathbf{G}^T \mathbf{x}$ is
computed at each outer iteration). The cost for computing $\nabla
\mathcal{L}(\mathbf{u},\mathbf{x})$ for general QPs is ${\cal O} (n^2)$. However, when the
matrices $\mathbf{Q}$  and $\mathbf{G}$ are sparse (e.g. network utility maximization
problem) the cost ${\cal O} (n^2)$ can be reduced substantially. The
other operations in algorithm {\bf IDFOM} are just vector operations
and, \an{hence,} they are of order ${\cal O} (n)$. Thus, the dominant
operation at the inner stage is the matrix-vector product.

\vspace{2pt}

\noindent \textbf{Outer iteration}:
The main
computational effort in the outer iteration of {\bf IDFOM} is done in computing
the inexact gradient of
the dual function:
$ \tilde{\nabla} d(\mathbf{x}) = \mathbf{G} \tilde{\mathbf{u}}(\mathbf{x}) + \mathbf{g}.  $
The cost for computing $\tilde{\nabla} d (\mathbf{x})$ for general QPs is
${\cal O} (np)$. However, when the matrix  $\mathbf{G}$ is sparse, this
cost  can be reduced.  The other operations in algorithm {\bf
IDFOM} are of order ${\cal O} (p)$. \an{Hence}, the
dominant operation  at the outer stage is also the matrix-vector
product.

\vspace{2pt}

\noindent Fig. \ref{in:fig:gprof_n150_dfgm_case1} displays the result
of profiling the code with gprof. In this simulation, a standard QP
with inequality constraints, and \an{with} dimensions $n = 150$ and $p = 225$
was solved by algorithm \textbf{IDFGM}. The profiling summary is
listed in the order of the time spent in each file.  This figure
shows that most of the execution time of the program is spent on
the  library module \textit{math-functions.c}. More exactly, the dominating function
is  \textit{mtx-vec-mul}, which multiplies a matrix with a vector.
\begin{figure}[h!]
\centering
\includegraphics[width=0.6\textwidth,height=6cm]{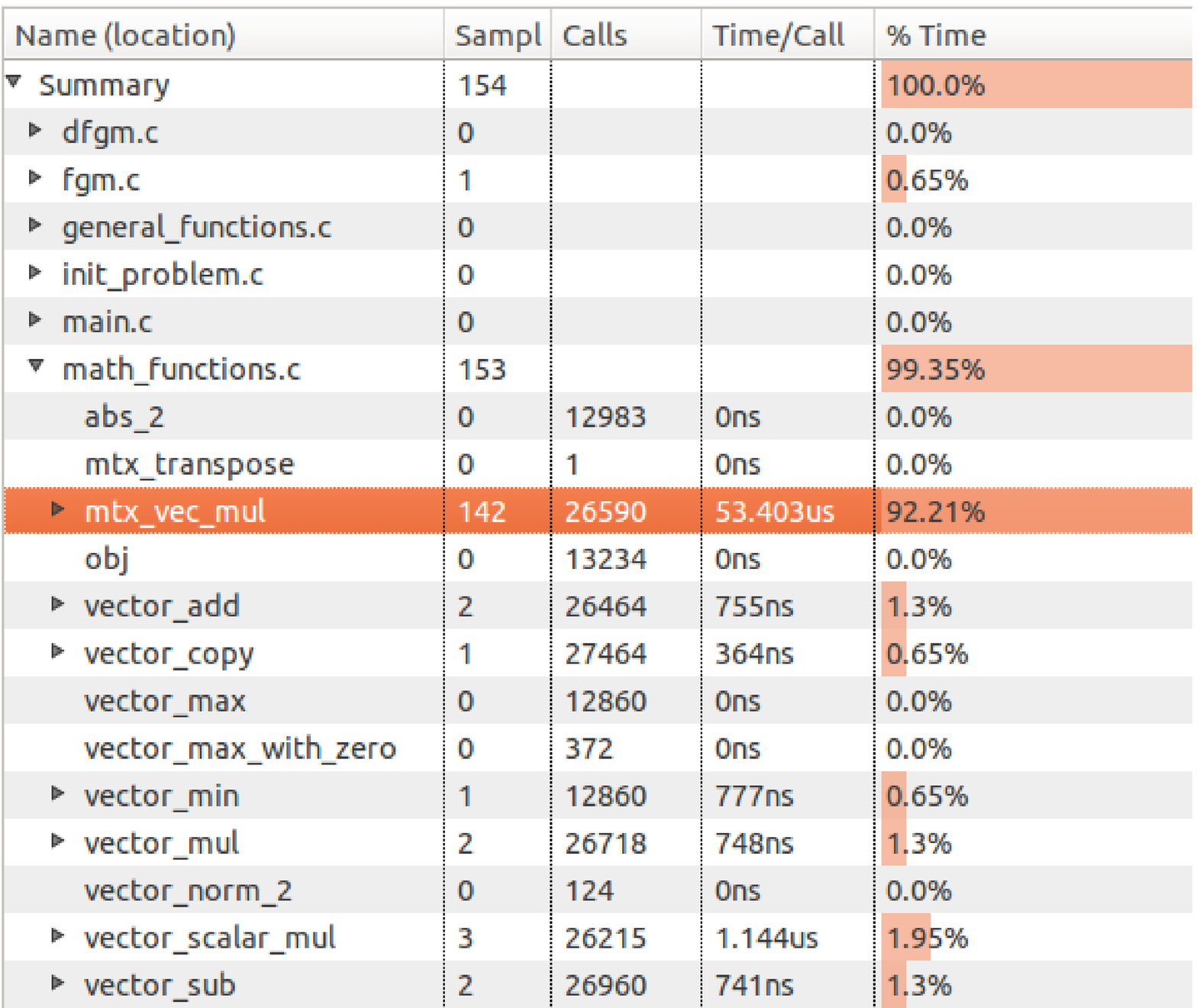}
\caption{Profiling the code with gprof.}
\label{in:fig:gprof_n150_dfgm_case1}
\end{figure}

\noindent In conclusion, in DuQuad the main operations are the
matrix-vector products. Therefore, DuQuad is adequate for solving QP
problems on hardware  with limited resources and capabilities, since
it does not require any solver for linear systems or other
complicating operations, while  most of the existing solvers for QPs
from the literature \an{(such as those implementing active set or interior point
methods)} require the capability of solving linear systems. On the
other hand, DuQuad can be also used for solving large-scale sparse
QP problems \an{since, in this case, the iterations are computationally inexpensive} (only
sparse matrix-vector products).


\section{Numerical simulations with DuQuad}
For  numerical experiments,  using  the solver DuQuade \cite{KwaNec:14}, we \an{at first} consider
random QP problems  and then a real-time MPC controller
for a  self balancing robot.

\subsection{Random QPs}
\noindent In this section we analyze the behavior of the dual first order methods presented in this chapter and implemented in DuQuad for solving random QPs.

\begin{figure}[ht!]
\begin{center}
\vskip-0.1cm
\includegraphics[width=1.05\textwidth,height=5cm]{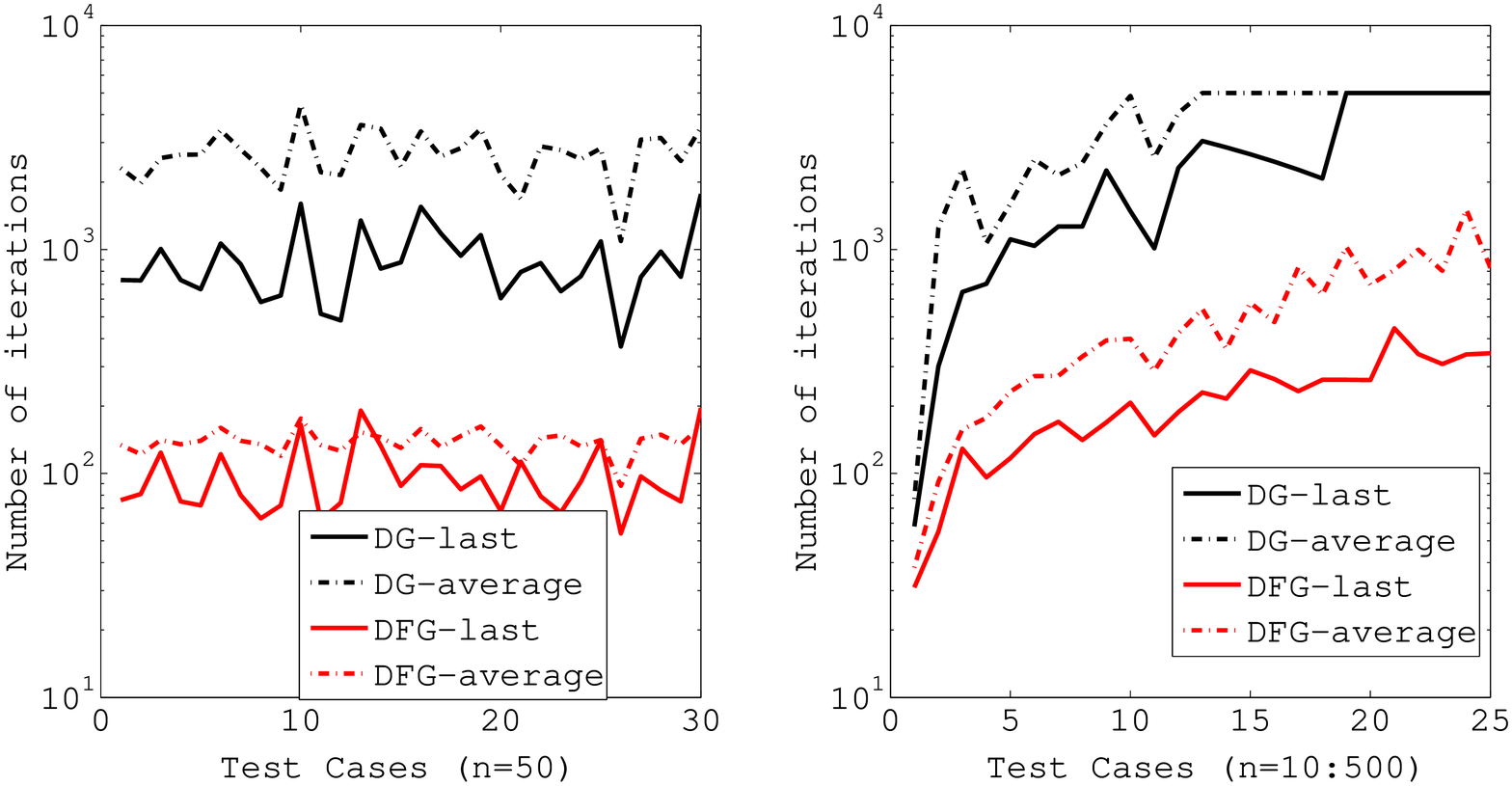}
\caption{Number of outer iterations on random QPs for  \textbf{IDGM}
and  \textbf{IDFGM} in primal last/average of iterates for different
test cases of the same dimension (left) and  of variable
dimension~(right). } \label{in:fig:comparison_dfo}
\end{center}
\end{figure}

\noindent  In Fig.~\ref{in:fig:comparison_dfo} we plot the practical
number of outer iterations on random QPs of algorithms \textbf{IDGM}
and \textbf{IDFGM} for different test cases of the same dimension $n=50$
(left) and for different test cases of variable dimension ranging from $n=10$ to
$n=500$ (right). We \an{have choosen the accuracy $\epsilon=0.01$ and
the stopping criteria is the requirement that both quantities
\[ \abs{f(\mathbf{u}) - f^{*}}  \quad \text{and} \quad \norm{[\mathbf{G} \mathbf{u} + \mathbf{g}]_+} \]
are} less than the accuracy $\epsilon$, \an{where $f^*$ has been computed a priori
with Matlab quadprog}. From this figure we observe that the number of
iterations \an{is} not varying much for different test cases and, also,
that the number of iterations \an{is} mildly dependent on \an{the} problem's
dimension. Finally, we  observe that dual first order  methods perform
usually better in the primal last iterate than in the average of primal  iterates.


\subsection{Real-time MPC for balancing robot}
\noindent In this section we use the dual first order methods presented in this chapter and implemented in DuQuad for solving a real-time MPC control problem.

\begin{figure}[htb!]
\centering
\includegraphics[width=0.5\textwidth,,height=5cm]{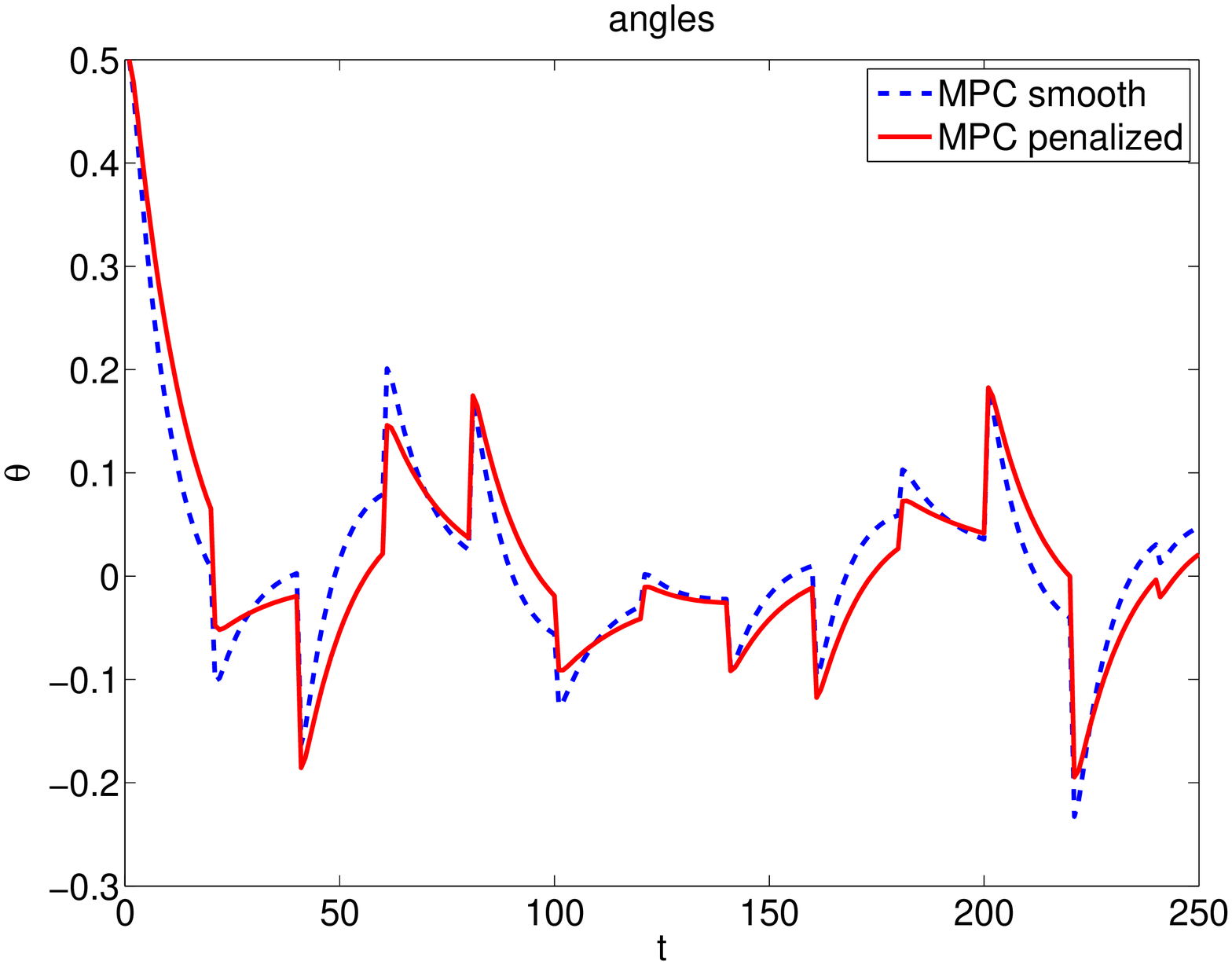}
\hspace{-0.5cm}
\includegraphics[width=0.5\textwidth,,height=5cm]{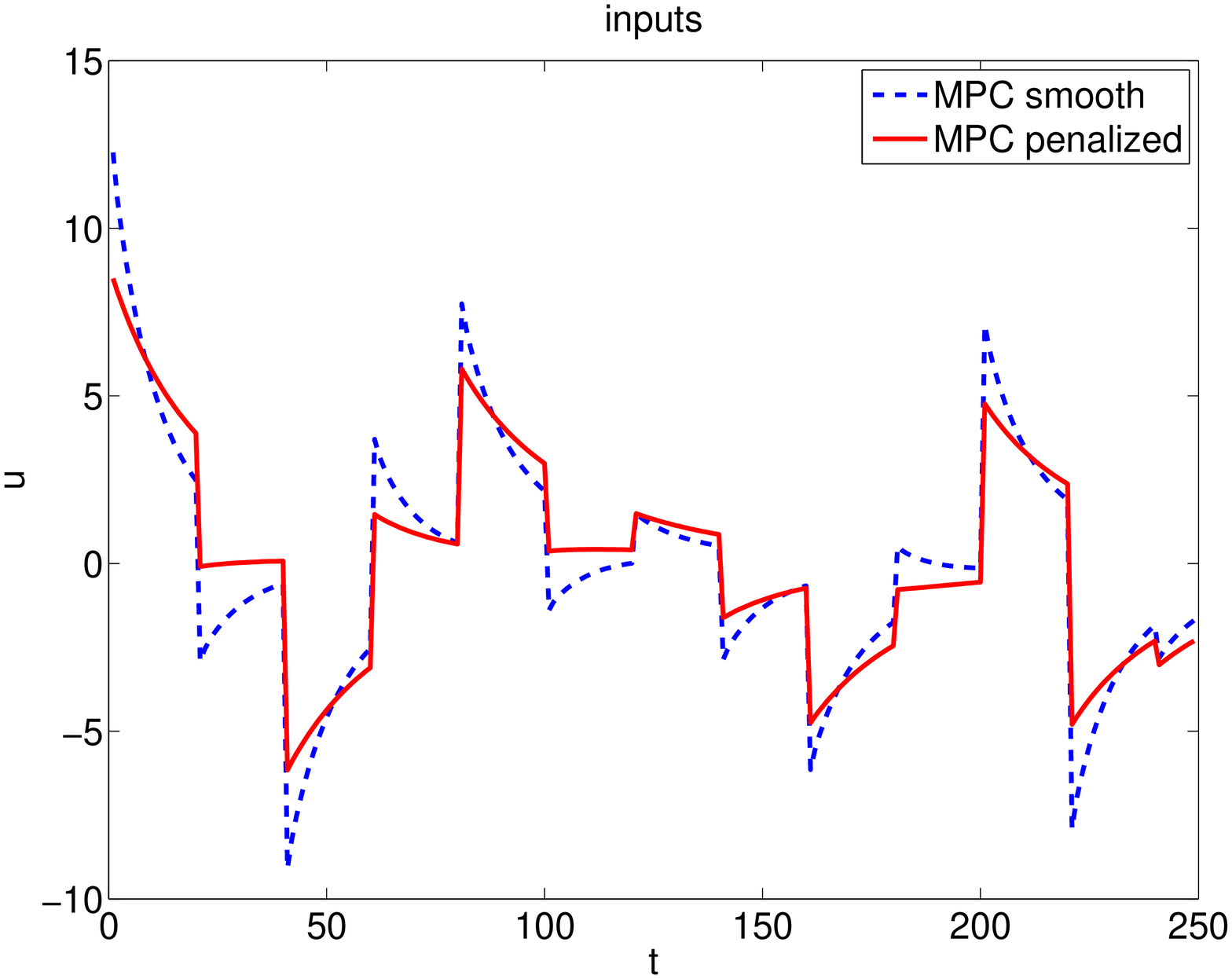}
\vspace*{-0.4cm}
\caption{The MPC trajectories of the state angle (left) and input
(right) for $N = 10$ obtained using algorithm {\bf IDGM} from DuQuad in the last
iterate with accuracy $\epsilon=10^{-2}$.}
\label{in:fig:states_inputs1}
\end{figure}

\noindent We consider a simplified model for the \an{self-balancing} Lego
mindstorm NXT extracted from \cite{Yam:08}. The model is linear time
invariant and stabilizable. The continuous linear model has the states $\mathbf{x} \in
\rset^4$ and inputs $\mathbf{u} \in \rset^{}$. The states for this system are the
horizontal position and speed ($h,\dot{h}$), and the angle to the
vertical and the angular velocity of the robot's body ($\theta,
\dot{\theta}$). The input for the system represents the pulse-width
modulaed voltage applied to both wheel motors in percentages. We
discretize the dynamical system via the zero-order hold method for a sample
time of $T=8 \text{ms}$ to obtain  the system matrices:
\begin{align*}
A&=\begin{bmatrix} 1 & 0.0054 & -2\cdot10^{-4} & 10^{-4} \\  0 & 0.4717 & -0.0465 & 0.0211
\\ 0 & 0.03 & 1.0049 & 0.0068 \\ 0 & 6.0742 & 1.0721 & 0.7633
\end{bmatrix}, \quad
B=\begin{bmatrix} 0.0002 \\ 0.0448 \\ -0.0025 \\ -0.5147
\end{bmatrix}.
\end{align*}

\noindent For this linear dynamical  system we consider the duty-cycle
percentage constraints for the inputs, i.e. $-12 \leq u(t) \leq 12$,  and additional
constraints for the position, i.e. $-0.5\leq h \leq 0.5$, and for the body
angle in degrees, i.e. $-15 \leq \theta \leq 15$. For the quadratic stage  cost
we consider matrices: $\mathbf{Q} = \text{diag} ([1 \; 1 \; 6\cdot10^2 \;1
])$ and $\mathbf{R }=  2$.

\noindent We consider two condensed  MPC formulations: \textit{MPC smooth}
and \textit{MPC penalized}, where we \an{impose} additionally a penalty
term  $\beta (u(t) - u(t-1))^2$, with $\beta=0.1$,
in order to get \an{a} smoother controller. Note
that in both formulations we obtain  QPs \cite{RawMay:09}.
Initial state is $\mathbf{x}=[0 \; 0\; 0.5 \; -0.35]^T$ and we add gentle
disturbances to the system at each $20$ simulation steps. In Fig.
\ref{in:fig:states_inputs1} we plot the MPC trajectories of the
state angle  and input  for a prediction horizon $N = 10$ obtained using
algorithm {\bf IDGM} in the last iterate with accuracy $\epsilon=10^{-2}$.
Similar state and input trajectories are obtained using the other versions of the scheme {\bf IDFOM} from DuQuad.
We observe a smoother behavior for MPC with \an{the} penalty term.


\end{document}